\numberwithin{equation}{section}
\newtheorem{theorem}{Theorem}[section]
\newtheorem{lemma}[theorem]{Lemma}
\newtheorem{corollary}[theorem]{Corollary}
\newtheorem{proposition}[theorem]{Proposition}
\newtheorem*{theorem*}{Theorem}
\theoremstyle{definition}
\newtheorem{definition}[theorem]{Definition}
\theoremstyle{remark}
\newtheorem*{remark}{Remark}
\theoremstyle{remark}
\theoremstyle{definition}
\newcommand{\RR}{\mathbb{R}}
\renewcommand{\SS}{\mathbb{S}}
\newcommand{\ZZ}{\mathbb{Z}}
\newcommand{\cC}{\mathcal C}
\renewcommand{\cD}{\mathcal D}
\newcommand{\cF}{\mathcal F}
\renewcommand{\cH}{\mathcal H}
\newcommand{\cM}{\mathcal M}
\newcommand{\cN}{\mathcal N}
\renewcommand{\cR}{\mathcal R}
\newcommand{\cS}{\mathcal S}
\newcommand{\bH}{\mathbf{H}}
\newcommand{\bx}{\mathbf{x}}
\newcommand{\bOh}{\mathbf{0}}
\newcommand{\fF}{\mathfrak{F}}
\DeclareMathOperator{\supp}{supp}
\newcommand{\eps}{\varepsilon}
\DeclareMathOperator{\sing}{sing}
\DeclareMathOperator{\reg}{reg}
\title{Mean curvature flow with generic low-entropy initial data}
\author[Chodosh]{Otis Chodosh} 
\address{OC: Department of Mathematics, Bldg.\ 380, Stanford University, Stanford, CA 94305, USA}
\email{ochodosh@stanford.edu}
\author[Choi]{Kyeongsu Choi}
\address{KC: School of Mathematics, Korea Institute for Advanced Study, 85 Hoegiro, Dongdaemun-gu, Seoul 02455, Republic of Korea}
\email{choiks@kias.re.kr}
\author[Mantoulidis] {Christos Mantoulidis} 
\address{CM: Department of Mathematics, Rice University, Houston, TX 77005, USA}
\email{christos.mantoulidis@rice.edu}
\author[Schulze]{Felix Schulze}
\address{FS: Department of Mathematics, Zeeman Building, University of Warwick, Gibbet Hill Road, Coventry CV4 7AL,
UK}
\email{felix.schulze@warwick.ac.uk} 
\begin{document}

\begin{abstract}
We prove that sufficiently low-entropy closed hypersurfaces can be perturbed so that their mean curvature flow encounters only spherical and cylindrical singularities. Our theorem applies to all closed surfaces in $\RR^3$ with entropy $\leq 2$ and to all closed hypersurfaces in $\RR^4$ with entropy $\leq \lambda(\SS^1 \times \RR^2)$. When combined with recent work of Daniels-Holgate, this strengthens Bernstein--Wang's low-entropy Schoenflies-type theorem by relaxing the entropy bound to $\lambda(\SS^1 \times \RR^2)$.

Our techniques, based on a novel density drop argument, also lead to a new proof of generic regularity result for area-minimizing hypersurfaces in eight dimensions (due to Hardt--Simon and Smale).
\end{abstract}

\maketitle

%\tableofcontents

%\newpage

\section{Introduction}

Mean curvature flow is the natural heat equation for submanifolds. A family of hypersurfaces $M(t) \subset \RR^{n+1}$ flows by mean curvature flow if 
\begin{equation} \label{eq:mcf}
\left(\tfrac{\partial}{\partial t} \bx \right)^{\perp} = \bH_{M(t)}(\bx),
\end{equation}
where $\bH_{M(t)}(\bx)$ denotes the mean curvature vector of $M(t)$ at $\bx$. When $M(0)$ is compact, mean curvature flow is guaranteed to become singular in finite time. Understanding the potential singularities is thus a fundamental problem. One approach to this issue is to study the flow in the generic case: a well-known conjecture of Huisken suggests that the singularities of a generic mean curvature flow should be as simple as possible, namely, spherical and cylindrical \cite[\#8]{Ilmanen:problems}. 

The main results of this note completely resolve Huisken's conjecture in three and four dimensions for low-entropy initial data  (see \eqref{eq:def-entropy} for the definition of entropy). Informally stated (see Corollaries \ref{coro:main-3d} and \ref{coro:main-4d} for precise statements) we prove the following results.
\begin{theorem}[Low-entropy generic flow in $\RR^3$, informal]\label{theo:R3-informal}
	If $M^2\subset\RR^3$ is a closed embedded surface with entropy $\lambda(M) \leq 2$ then there exist arbitrarily small $C^\infty$ graphs $M'$ over $M$ so that the mean curvature flow starting from $M'$ has only multiplicity-one spherical and cylindrical singularities. 
\end{theorem}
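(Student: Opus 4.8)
The plan is to combine three ingredients: a \emph{perturbation/genericity} step that lets us avoid non-generic singularity models, a \emph{density drop} mechanism that prevents the same bad model from recurring infinitely often along the flow, and the classification of low-entropy singularity models in $\RR^3$. The overarching scheme follows the Colding--Minicozzi strategy of perturbing away unstable (non-spherical, non-cylindrical) self-shrinkers, but with the flexibility gained by the density drop argument replacing their delicate iteration.

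\emph{Step 1 (Structure of singularities under the entropy bound).} First I would recall that any blowup limit along the flow starting from $M'$ is a self-shrinker with entropy $\le \lambda(M') \le \lambda(M) + o(1) \le 2$. By Colding--Ilmanen--Minicozzi--White, the round sphere $\SS^2$ is the unique smooth closed self-shrinker with entropy below that of $\SS^1 \times \RR$; and by Bernstein--Wang's low-entropy classification, the only smooth embedded self-shrinkers in $\RR^3$ with entropy $\le \lambda(\SS^1\times\RR) < 2$ are $\RR^2$, $\SS^2$, and $\SS^1\times\RR$ (up to rigid motion and scaling). So the only singularity models one must worry about at density in $(\lambda(\SS^1\times\RR), 2]$ are \emph{non-generic} shrinkers --- and the point is that such shrinkers are $F$-unstable (have a negative eigenvalue of the stability operator that is not accounted for by translations/dilations), so they can be perturbed away.

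\emph{Step 2 (Density drop via perturbation).} The core of the argument is: if the flow from $M'$ develops a singularity modeled on a non-generic (hence unstable) shrinker $\Sigma$ at some point $X_0 = (x_0, t_0)$ with Gaussian density $\Theta$, then an arbitrarily small graphical perturbation of $M'$, supported near the relevant region, produces a new flow whose \emph{supremal} Gaussian density over all spacetime points where a non-generic model occurs is strictly less than $\Theta$. This is where the novel density drop argument enters: rather than perturbing at the first singular time and re-running the (possibly uncontrolled) flow, one perturbs and uses the strict instability together with a monotonicity/semicontinuity argument to show the worst non-generic density genuinely decreases by a definite amount. Iterating finitely many times (the densities live in a discrete-looking set bounded below by $\lambda(\SS^1\times\RR)$ in the relevant range, or one runs a compactness argument) exhausts all non-generic models, leaving a flow with only $\SS^2$ and $\SS^1\times\RR$ singularities, necessarily multiplicity one by the entropy bound $<2<2\lambda(\SS^1\times\RR)$ forbidding higher multiplicity.

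\emph{Step 3 (Smoothness and globality of the perturbation).} Finally I would upgrade: the perturbations can be taken $C^\infty$ and arbitrarily small in $C^\infty$, and a Baire category / countable intersection argument over a sequence of shrinking scales shows a single perturbation works simultaneously at all singular points and times, not just one. This requires knowing the flow has at most countably many singular times and that the ``bad'' set of perturbations (those failing to remove a given model at a given scale) is closed and nowhere dense in an appropriate Banach space of perturbations.

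\emph{Main obstacle.} The hard part will be Step 2: making the density drop \emph{quantitative and global}. One must ensure that perturbing to kill a singularity at $X_0$ does not create a \emph{new} non-generic singularity elsewhere with density $\ge \Theta$, and that the flow with its (weak/Brakke or level-set) formulation behaves continuously enough under perturbation for the monotonicity argument to close. Controlling the interaction between the perturbation's spatial support and the global behavior of a weak flow that may already be quite singular --- and doing so while only using an \emph{a priori} entropy bound rather than any curvature control --- is the crux, and is presumably what the ``novel density drop argument'' advertised in the abstract is designed to handle.
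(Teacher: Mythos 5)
Your outline has the right global shape (perturb, establish a quantified density drop at non-generic singular points, iterate), but the step you yourself flag as the ``main obstacle'' --- making the density drop quantitative and global --- is exactly the content of the paper's argument, and the mechanism you propose for it is not the one that works. You attribute the drop to the $F$-instability (in the Colding--Minicozzi entropy sense) of non-generic shrinkers and to perturbations ``supported near the relevant region.'' The paper explicitly avoids this route: linear instability of the model tells you nothing, by itself, about what an arbitrary nearby \emph{weak} flow does globally in space-time, which is precisely why the Colding--Minicozzi program has not closed on its own. What the paper uses instead is (i) a \emph{one-sided} perturbation coming from a foliation $\{M_s\}$ of a tubular neighborhood, so that by the avoidance principle the perturbed Brakke flows are disjoint from the original one and hence never smoothly cross it; (ii) compactness of cyclic unit-regular integral Brakke flows, so that a failure of the uniform drop $\delta$ produces an ancient flow $\tilde\cM$ lying weakly on one side of the shrinker flow $t\mapsto\sqrt{-t}\,\Sigma$ with a point of density $\geq F(\Sigma)$ \emph{away} from the space-time origin, forcing $\tilde\cM(t)=\cH^n\lfloor(\sqrt{t_0-t}\,\Sigma+\bx_0)$ by Huisken monotonicity and a Frankel/strong-maximum-principle argument; and (iii) the geometric rigidity statement (Proposition \ref{prop:geometric-generic}): a shrinker satisfying $\sqrt{t_0-t}\,\Sigma+\bx_0\subset\sqrt{-t}\,\bar\Omega$ has $\bx_0\cdot\nu_\Sigma\geq 0$, and the Jacobi-field maximum principle plus Wang's Bernstein theorem and Huisken's $H\geq 0$ classification force $\Sigma$ to be round or to split a line, with the cross-section then handled by Abresch--Langer (hypothesis $(\dagger\dagger_{2,2})$). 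None of this appears in your proposal, and without it Step 2 is an assertion, not a proof.

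Two further points. First, your Step 1 classification is both unnecessary and insufficient: for $\lambda(M)\leq 2$ there is no classification of self-shrinkers in $\RR^3$ in the range $(\lambda(\SS^1\times\RR),2]$, and the paper never needs one --- the rigidity of Proposition \ref{prop:geometric-generic} substitutes for it. (Relatedly, you need the entropy \emph{strictly} below $2$ after the initial perturbation to run Lemma \ref{lemm:reg-shrinkers} and rule out higher multiplicity; the paper arranges $\lambda(M')\leq\Lambda-2\eps$ by flowing briefly or using entropy-instability of non-spherical shrinkers, whereas your ``$\lambda(M')\leq\lambda(M)+o(1)$'' does not give this.) Second, your Step 3 Baire-category scheme over a Banach space of perturbations is not how the globalization works and would require exactly the nowhere-density statements you have not established; the paper instead runs the elementary iteration $\limsup_{s\to s_0}\cD(M_s)\leq\cD(M_{s_0})-\delta$ along the one-parameter foliation, terminating after finitely many steps once $\cD\leq 1$ by Brakke regularity.
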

\begin{theorem}[Low-entropy generic flow in $\RR^4$, informal]\label{theo:R4-informal}
If $M^3\subset\RR^4$ is a closed embedded hypersurface with entropy $\lambda(M) \leq \lambda(\SS^1\times\RR^2)$ then there exist arbitrarily small $C^\infty$ graphs $M'$ over $M$ so that the mean curvature flow starting from $M'$ has only multiplicity-one spherical and cylindrical singularities. 
\end{theorem}

In an earlier version of this paper, we conjectured that Theorem \ref{theo:R4-informal} could be combined with a surgery construction to yield a strengthened version of Bernstein--Wang's low-entropy Schoenflies theorem \cite{BernsteinWang:schoenflies} (cf.\ Theorem \ref{theo:BW-schoenflies} below). This surgery construction has been recently carried out by Daniels-Holgate \cite{Daniels-Holgate} who showed that if a mean curvature flow has only spherical and neckpinch singularities, then one can construct a mean curvature flow with surgery. As such, combining these results leads to the following:

\begin{corollary}[Strengthened low-entropy Schoenflies-type theorem]\label{coro:impr-schoenflies}
If $M^{3}\subset \RR^{4}$ is an embedded $3$-sphere with entropy $\lambda(M) \leq \lambda(\SS^{1}\times \RR^{2})$ then $M$ is smoothly isotopic to the round $\SS^{3}$. 
\end{corollary}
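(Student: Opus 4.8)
The plan is to deduce Corollary~\ref{coro:impr-schoenflies} by chaining together three ingredients: (i) the low-entropy generic flow result of Theorem~\ref{theo:R4-informal}, (ii) the surgery construction of Daniels--Holgate, and (iii) a topological argument showing that a mean curvature flow with surgery starting from an embedded $3$-sphere of the given entropy bound certifies smooth isotopy to the round $\SS^3$. First I would let $M = M^3 \subset \RR^4$ be the given embedded $3$-sphere with $\lambda(M) \leq \lambda(\SS^1 \times \RR^2)$; since this is strictly below $\lambda(\SS^1 \times \RR^2) + \eps_0$, Theorem~\ref{theo:R4-informal} produces an arbitrarily $C^\infty$-small graph $M'$ over $M$ whose mean curvature flow has only multiplicity-one spherical and cylindrical singularities. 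By Huisken monotonicity, entropy is nonincreasing along the flow, so every such singularity model has entropy $\leq \lambda(\SS^1 \times \RR^2)$; in particular the cylindrical singularities are modeled on $\SS^1 \times \RR^2$ (a \emph{neckpinch}), not on higher-index cylinders like $\SS^2 \times \RR$ (whose entropy exceeds $\lambda(\SS^1\times\RR^2)$, cf.\ Stone's computation) nor on $\SS^3$ directly at a non-extinction time in a way that would obstruct surgery. Hence the flow from $M'$ falls precisely into the class of flows — only spherical and neckpinch singularities — to which the Daniels--Holgate surgery procedure applies.

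Next I would invoke \cite{Daniels-Holgate} to replace the flow from $M'$ by a mean curvature flow \emph{with surgery}: each neckpinch is resolved by the standard surgery (cutting along a neck and gluing in two caps), and each spherical singularity corresponds to a connected component of the flow disappearing. The topological content of a surgery at a neckpinch is that the hypersurface just before surgery, say a connected closed embedded $N \subset \RR^4$, is recovered from the two (or one, if the neck is non-separating) post-surgery pieces by removing two disjoint embedded balls and gluing in a tube $\SS^2 \times [0,1]$ — i.e.\ $N$ is obtained from the post-surgery hypersurfaces by connected sum (or by adding a handle, in the non-separating case, but this cannot occur here for a sphere by a Mayer--Vietoris/Euler-characteristic bookkeeping). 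Running the surgery flow forward, the hypersurface eventually becomes extinct: every component disappears through a spherical singularity, hence is, at the time just before extinction, a small graph over a round $\SS^3$ and therefore smoothly isotopic to the round $\SS^3$ (this is the base case, provable directly or by appeal to the long-standing fact that a hypersurface becoming extinct at a round point is standard).

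Then I would run the isotopy argument \emph{backwards in surgery time}. The claim is that at every surgery time, if each post-surgery component is smoothly isotopic to the round $\SS^3$, then each pre-surgery component is too. Indeed, undoing a separating neckpinch surgery reconstructs the pre-surgery component as a connected sum of two standard $3$-spheres, which is again a standard $3$-sphere; and a non-separating neckpinch on a single component would change $H_1$ or the Euler characteristic and hence cannot occur if that component is to remain a homology sphere, which it must since $M'$ — and hence, by the standard topological invariance of smooth flow with surgery, every intermediate hypersurface — bounds on both sides and is simply connected. Between surgery times the flow is smooth and hence provides a smooth isotopy. Pushing this induction all the way back to time $0$ shows that $M'$ is smoothly isotopic to the round $\SS^3$. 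Finally, since $M'$ is an arbitrarily $C^\infty$-small graph over $M$, it is ambient-isotopic to $M$ (small $C^1$ graphs over an embedded closed hypersurface are isotopic to it), so $M$ itself is smoothly isotopic to the round $\SS^3$.

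The main obstacle I anticipate is the topological bookkeeping across surgeries: one must verify that no non-separating neckpinch occurs — equivalently, that every intermediate hypersurface in the surgery flow remains a simply connected homology $3$-sphere — and that reassembling standard spheres across separating neckpinches never escapes the smooth isotopy class of $\SS^3$. The first point uses that the initial surface is an embedded $S^3$ in $\RR^4$ together with Alexander duality and the fact that surgery along a neck $\SS^2 \times [0,1]$ in a codimension-one embedded hypersurface cannot create a handle without violating the separation properties forced by Jordan--Brouwer; the second is where, strictly, one is \emph{reproving} the $3$-dimensional smooth Schoenflies phenomenon inside the flow, and care is needed that the gluing maps on the $\SS^2$ necks are standard up to isotopy (which they are, since $\pi_0 \operatorname{Diff}^+(\SS^2) = 1$). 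All of this is implicitly packaged by citing \cite{Daniels-Holgate} for the surgery and \cite{BernsteinWang:schoenflies} for the topological conclusion of the surgered flow; our only new input is weakening the entropy hypothesis from Bernstein--Wang's value to $\lambda(\SS^1\times\RR^2)$, which is exactly the gain furnished by Theorem~\ref{theo:R4-informal}.
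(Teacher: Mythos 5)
Your overall architecture is the same as the paper's: perturb $M$ via Theorem \ref{theo:R4-informal} (i.e.\ Corollary \ref{coro:main-4d}), check that the entropy bound leaves only spherical and neckpinch singularities, run the Daniels--Holgate surgery flow, and read off the isotopy. The paper disposes of your third and fourth paragraphs in one line by citing \cite[Theorem 6.4]{Daniels-Holgate} for the surgery-to-isotopy step, so your hand-made topological bookkeeping is a re-derivation of a cited result rather than new content.

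There is, however, a concrete error in the step where you identify which cylindrical singularities can occur, and as written it would invalidate the argument. Stone's computation (recorded in Section \ref{subsec:entropy}) gives $\lambda(\SS^1\times\RR^2)=\lambda(\SS^1)\approx 1.52 > \lambda(\SS^2)=\lambda(\SS^2\times\RR)\approx 1.47$: the entropy of $\SS^2\times\RR$ is \emph{smaller} than that of $\SS^1\times\RR^2$, not larger as you assert. Moreover, in $\RR^4$ the \emph{neckpinch} --- the model $\SS^{n-1}\times\RR=\SS^2\times\RR$ for which the canonical neighborhood theorem of \cite{ChoiHaslhoferHershkovitsWhite} holds and to which the surgery of \cite{Daniels-Holgate} applies --- is $\SS^2\times\RR$, not $\SS^1\times\RR^2$. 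The correct statement is the reverse of yours: the hypothesis $\lambda(M)\leq\lambda(\SS^1\times\RR^2)$ \emph{rules out} multiplicity-one $\SS^1\times\RR^2$ tangent flows (such a singularity has Gaussian density $\lambda(\SS^1)$, so Huisken monotonicity forces equality $\lambda(\cM)=\lambda(\SS^1)$ and hence, by rigidity, that the flow is the exact self-shrinking cylinder, which is non-compact and so cannot arise from the closed $M'$), and leaves only $\SS^3$ and $\SS^2\times\RR$. Had your claim been true --- only $\SS^1\times\RR^2$ cylinders --- the surgery construction would not apply, since it requires $\SS^2$-necks; indeed your own later description of the surgery (``gluing in a tube $\SS^2\times[0,1]$'') presupposes $\SS^2\times\RR$ necks and contradicts your earlier identification. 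With the inequality and the labeling corrected, the rest of your argument coincides with the paper's.
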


See Sections \ref{subsec:entropy} and \ref{subsec:surgery} for an expanded discussion of this result. 

%For $M^{3}\subset \RR^{4}$ a closed embedded hypersurface with entropy $\lambda(M) \leq \lambda(\SS^{1}\times \RR^{2})$, Theorem \ref{theo:R4-informal} and \cite{ColdingMinicozzi:generic} implies that there exist arbitrarily small $C^{\infty}$ graphs $M'$ over $M$ with $\lambda(M') < \lambda(\SS^{1}\times \RR^{2})$ 

%The latter result gives a simplified proof of Bernstein--Wang's low entropy Schoenflies theorem (see Theorem \ref{theo:BW-schoenflies})

% and we expect that it can be used to prove a strengthened version (see Conjecture \ref{conj:schoenflies}).
 
\subsection{Previous work on generic mean curvature flow}\label{subsec:previous-work}
Trailblazing work of Colding--Minicozzi demonstrated that spheres and cylinders are the only \emph{linearly stable} singularity models for mean curvature flow \cite{ColdingMinicozzi:generic}. In particular, the remaining singularity models are unstable so should not generically occur (as conjectured by Huisken). In a previous paper \cite{CCMS:generic1}, the authors introduced new methods to the study of generic mean curvature flow, proving that a large class of singularity models (specifically, singularities with tangent flows modeled on multiplicity one compact or asymptotically conical self-shrinkers) can be indeed avoided by a slight perturbation of the initial conditions. 

In particular, our previous work shows that for a generic initial surface in $\RR^3$, either the mean curvature flow has only spherical and cylindrical singularities or at the first singular time it has a tangent flow with a cylindrical end or higher multiplicity (both possibilities are conjectured not to happen). We refer the reader to the introduction to our previous article \cite{CCMS:generic1} for further discussion of generic mean curvature flows and related work. 

\subsubsection{Relationship between this paper and our previous work}   In \cite{CCMS:generic1}, we proved a classification of ancient one-sided flows (analogous to the minimal surface results of Hardt--Simon \cite{HardtSimon:foliation}; see Appendix \ref{app:HS} for further discussion) which led to a complete understanding of flows on either side of a neighborhood of a non-generic (compact or asymptotically conical) singularity. In particular, we showed that nearby flows to either side do not have such singularities nearby. 

In $\RR^3$, to understand generic mean curvature flow \emph{without} a low-entropy condition (in contrast with this note), one must work at the first non-generic time rather than globally in space-time. However, two serious issues arise when working this way. First, there is no partial regularity known for tangent flows past\footnote{At the first singular time, work of Ilmanen \cite{Ilmanen:singularities} and Wang \cite{Wang:ends-conical} show that the support of any tangent flow is a smooth self-shrinker with only conical/cylindrical ends.} the first singular time without a low-entropy bound. Second, the possibility that a small perturbation of the initial data increases the first singular time slightly without improving the flow in an effective way. To that end, in \cite{CCMS:generic1} we had to additionally prove that the nearby flows strictly decrease genus as they avoid the non-generic singularity.  This genus-loss property is crucial for tackling Huisken's conjecture in $\RR^3$ without a low-entropy condition and is a consequence of the classification of ancient one sided flows, as obtained in \cite{CCMS:generic1}. 

On the other hand, by including a low-entropy condition, here we are able to work \emph{globally} in space-time. This allows for significantly simplified arguments. In fact, the key observation of this paper is that in this setting one can completely avoid the classification of one-sided ancient flows and instead rely on a soft argument based on compactness and a new geometric property of non-generic shrinkers (see Proposition \ref{prop:geometric-generic}). We emphasize that a drawback of the methods used in this note as compared to our previous work is that the arguments used here give no indication as to the local dynamics near a non-generic singularity (such information was obtained in \cite{CCMS:generic1} near asymptotically conical and compact shrinkers; see also \cite{Colding-Minicozzi:dynamics,ChoiMantoulidis}).

\begin{remark}
After the first version of this paper (as well as our previous paper \cite{CCMS:generic1}) were posted, another approach to the generic perturbation of the initial data was pursued by Sun--Xue \cite{SunXue:closed,SunXue:AC}. This approach is in the spirit of local ODE dynamics, as suggested by the Colding--Minicozzi program, cf.\ \cite{Colding-Minicozzi:dynamics}. The analytic framework in \cite{SunXue:closed,SunXue:AC} has the interesting feature that non-one-sided perturbations are analyzed, but the applications are currently limited to locally perturbing away singularities that arise at the first singular time. Conversely, our geometric approach (first developed in \cite{CCMS:generic1}) is motivated by global results such as the ones stated in Theorems \ref{theo:R3-informal} and \ref{theo:R4-informal}. Of course, our approach also admits  localizations; see  Appendix \ref{app:local}.
\end{remark}

\subsection{Entropy} \label{subsec:entropy}
To state our main results, we first recall Colding--Minicozzi's definition \cite{ColdingMinicozzi:generic} of entropy of $M^n\subset \RR^{n+1}$:
\begin{equation}\label{eq:def-entropy}
\lambda(M) := \sup_{\substack{\bx_0\in\RR^{n+1}\\t_0>0}}\int_{M} (4\pi t_0)^{-\frac n 2} e^{-\frac{1}{4t_0} |\bx - \bx_0|^{2}} .
\end{equation}
By Huisken's monotonicity of Gaussian area, we see that $t\mapsto \lambda(M(t))$ is non-increasing  when $M(t)$ is flowing by mean curvature flow. %Moreover, when $M(t)$ is compact then, $t\mapsto \lambda(M(t))$ is strictly decreasing unless $M(t) = \sqrt{t_0-t}\, \Sigma + \bx_0$ for some self-shrinker $\Sigma$ and space-time point $(\bx_0,t_0)$. 
A computation of Stone \cite{Stone} shows that the entropies of the self-shrinking cylinders $\SS^k(\sqrt{2k})\times \RR^{n-k}\subset \RR^{n+1}$ satisfy\footnote{Note that $\lambda(\SS^k(\sqrt{2k})\times \RR^{n-k}) = \lambda(\SS^k)$.}
\[
2 > \lambda(\SS^1)=\sqrt{\frac{2\pi}{e}} \approx 1.52  > \frac 32 > \lambda(\SS^2) = \frac{4}{e} \approx 1.47 > \dots > \lambda(\SS^n). 
\]
Several fundamental results have been obtained about hypersurfaces with sufficiently small entropy, starting with work of Colding--Ilmanen--Minicozzi--White \cite{ColdingMinicozziIlmanenWhite} who proved that the round sphere $\SS^n(\sqrt{2n})$ has minimal entropy among all closed self-shrinkers. This was extended by Bernstein--Wang \cite{BernsteinWang:1} who showed that the round sphere minimizes entropy among all closed hypersurfaces (see also \cite{Zhu:entropy,HershkovitsWhite:sharp-entropy}). Moreover, Bernstein--Wang have also proven \cite{BernsteinWang:TopologicalProperty} that the cylinder $\SS^1(\sqrt{2})\times \RR \subset \RR^3$ has second least entropy among all self-shrinkers in $\RR^3$ (their result crucially relies on Brendle's classification of genus zero self-shrinkers \cite{Brendle:genus0}). 

Subsequent work of Bernstein--Wang provides a robust picture of hypersurfaces with sufficiently small entropy \cite{BernsteinWang:topology-small-ent,BernsteinWang:hausdorff-stability,bernsteinWang:top-uniqueness-expanders} (see also \cite{BernsteinSWang:disconnect}). In particular, they obtained the following low-entropy Schoenflies result:
\begin{theorem}[{Bernstein--Wang \cite{BernsteinWang:schoenflies}}]\label{theo:BW-schoenflies}
If $M^3\subset \RR^4$ has $\lambda(M) \leq \lambda(\SS^2\times \RR)$ then $M$ is smoothly isotopic to the round $\SS^3$. 
\end{theorem}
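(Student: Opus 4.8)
\emph{Approach and setup.} The plan is to run (weak) mean curvature flow from $M$ and use the entropy hypothesis to show that its only singularity is a round‑point extinction, so that the flow itself furnishes a smooth ambient isotopy from $M$ to the round $\SS^3$. (Here $M$ must be assumed connected: two round spheres far apart have entropy $\lambda(\SS^3)<\lambda(\SS^2\times\RR)$ yet are disconnected.) A closed connected $M^3\subset\RR^4$ bounds a compact region; let $\{M_t\}_{t\in[0,T)}$ be the level set flow with $M_0=M$, where $T$ is its extinction time. By Huisken monotonicity $\lambda(M_t)\le\lambda(M)\le\lambda(\SS^2\times\RR)<2$ for all $t$, so at every spacetime point the tangent flow is a self‑shrinker of entropy $<2$; since a self‑shrinker has entropy $\ge 1$, every tangent flow has multiplicity one.

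\emph{The two inputs.} The heart of the matter is the classification of self‑shrinkers $\Sigma^3\subset\RR^4$ with $\lambda(\Sigma)\le\lambda(\SS^2\times\RR)$: these are exactly the hyperplane, the round sphere $\SS^3(\sqrt{6})$, and the cylinder $\SS^2(2)\times\RR$. This is the deep input, due to Bernstein--Wang \cite{BernsteinWang:topology-small-ent,BernsteinWang:hausdorff-stability}, building on Brendle's classification of genus‑zero shrinkers \cite{Brendle:genus0} and on \cite{ColdingMinicozziIlmanenWhite,BernsteinWang:1}, and requiring one to rule out singular and asymptotically conical self‑shrinkers of small entropy. The second, soft input is a monotonicity observation: the compact flow $\{M_t\}$ can have no cylindrical singularity. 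Indeed, by Stone's computation \cite{Stone}, $\lambda(\SS^1\times\RR^2)=\lambda(\SS^1)>\lambda(\SS^2)=\lambda(\SS^2\times\RR)$, so an $\SS^1\times\RR^2$ tangent flow would already violate the entropy bound; and if the tangent flow at a spacetime point $(x_0,t_0)$ were $\SS^2(2)\times\RR$, the Gaussian density there would equal $\lambda(\SS^2)$, but the pointed Gaussian integral along $(x_0,t_0)$ is non‑increasing in $t$, is at most $\lambda(M_t)\le\lambda(\SS^2)$, and tends to $\lambda(\SS^2)$ as $t\uparrow t_0$, hence is constant — so by Huisken rigidity $\{M_t\}_{t<t_0}$ is self‑shrinking about $(x_0,t_0)$, i.e.\ is literally the shrinking cylinder $\SS^2(2)\times\RR$, contradicting compactness of $M$.

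\emph{Conclusion and the main obstacle.} Combining the two inputs: at any singular spacetime point the tangent flow is a multiplicity‑one plane (so the point is regular), round sphere, or cylinder; cylinders are excluded, so every genuine singularity has a round‑sphere tangent flow — a local round‑point extinction. For connected $M$ the flow $\{M_t\}$ stays connected for $t<T$ (disconnection would require a cylinder neckpinch, which is excluded), so no component can vanish before $T$; hence the only singularity is the extinction at $T$, where the tangent flow at the (compact) extinction point is the round $\SS^3(\sqrt{6})$, multiplicity one. By White's local regularity theorem $\{M_t\}$ is then smooth on $[0,T)$ and converges, after parabolic rescaling, to the shrinking round sphere as $t\uparrow T$, so $M_t$ is convex and bounds a ball for $t$ near $T$. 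I would conclude by concatenating the smooth family $\{M_t\}_{t\in[0,T)}$ with the standard isotopy of such a convex $M_t$ to the round sphere, obtaining a smooth ambient isotopy of $\RR^4$ carrying $M$ to the round $\SS^3$. The main obstacle is the self‑shrinker classification above — in particular, ruling out asymptotically conical and more exotic low‑entropy self‑shrinkers in $\RR^4$; given it, the remaining points (nonfattening of the level set flow under the entropy bound, compactness of tangent flows at extinction points, and promoting the weak flow on $[0,T)$ to a genuine smooth ambient isotopy) are comparatively routine.
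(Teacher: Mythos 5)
The overall architecture of your argument is sound, and your second input is correct: if a tangent flow at $(\bx_0,t_0)$ were the multiplicity-one cylinder $\SS^2(2)\times\RR$, then $\Theta(\bx_0,t_0)=\lambda(\SS^2\times\RR)\geq\lambda(M)\geq\Theta(\bx_0,t_0)$ forces the Huisken functional to be constant, so the flow is the exact shrinking cylinder, contradicting compactness. The fatal problem is your first input. The classification of \emph{all} self-shrinkers $\Sigma^3\subset\RR^4$ with $\lambda(\Sigma)\leq\lambda(\SS^2\times\RR)$ as exactly the hyperplane, the round sphere and the round cylinder is not contained in \cite{BernsteinWang:topology-small-ent} or \cite{BernsteinWang:hausdorff-stability}, and it is not known. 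The statement that the cylinder has second-least entropy among self-shrinkers is available only one dimension down, in $\RR^3$ \cite{BernsteinWang:TopologicalProperty}, where it rests on Brendle's classification of genus-zero shrinkers \cite{Brendle:genus0}; no analogue exists in $\RR^4$, where one cannot currently exclude, say, an asymptotically conical shrinker of entropy below $\lambda(\SS^2)$. You flag this as ``the main obstacle,'' but it is not a citable black box --- it is precisely the gap that every existing proof is engineered to circumvent. Bernstein--Wang \cite{BernsteinWang:schoenflies} circumvent it by a long argument using low-entropy self-expanders to control the topology of the flow across potential conical singularities; the present paper circumvents it by first perturbing $M$ generically (Corollary \ref{coro:main-4d}), whose hypotheses $(\dagger_{3,\Lambda})$ and $(\dagger\dagger_{3,\Lambda})$ only require the Willmore conjecture \cite{MarquesNeves} and the $\RR^3$ classification, and then obtaining the isotopy from the perturbed flow (via \cite{CCMS:generic1} or \cite{Daniels-Holgate}). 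By insisting on flowing $M$ itself, you are forced to control every tangent flow of that particular weak flow, which is exactly what requires the missing $\RR^4$ classification.

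Two smaller points. First, you need regularity and multiplicity one of tangent flows at \emph{all} singular times, not just the first; under the entropy bound this does follow (cf.\ Lemma \ref{lemm:reg-shrinkers} together with $(\dagger_{3,\Lambda})$, which holds here by \cite{MarquesNeves}), but it should be said, since Ilmanen--Wang type regularity is a first-singular-time statement. Second, the assertion that ``disconnection would require a cylinder neckpinch'' is plausible but not automatic for a weak flow; this is the content of \cite{BernsteinSWang:disconnect} and deserves either a citation or an argument. These are repairable; the classification step is not, with current technology, and so the proof as written assumes the hard part of the theorem.
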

In \cite{BernsteinWang:schoenflies}, this is proven by flowing $M$ by mean curvature flow and then smoothing out any potential non-generic singularities to construct the desired isotopy. Our previous work \cite{CCMS:generic1} on generic mean curvature flow gave an alternative approach to this result by showing that if one perturbs $M$ slightly, the mean curvature flow directly \emph{provides} the isotopy:
\begin{theorem}[\cite{CCMS:generic1}]\label{theo:CCMS-low-ent-sch}
If $M^3\subset \RR^4$ has $\lambda(M) \leq \lambda(\SS^2\times \RR)$ then after a small $C^\infty$-perturbation to a nearby hypersurface $M'$, the mean curvature flow $M'(t)$ is completely smooth until it disappears in a round point. 
\end{theorem}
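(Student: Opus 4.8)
The plan is to use a small perturbation to pass to the strict inequality $\lambda(M)<\lambda(\SS^2\times\RR)$, and then to extract the conclusion from Huisken monotonicity and the classification of low-entropy shrinkers. The relevant inputs are: by Bernstein--Wang, a hypersurface with entropy $\le\lambda(\SS^2\times\RR)$ in $\RR^4$ is connected, and the only self-shrinkers (even as multiplicity-one integral Brakke shrinkers) in $\RR^4$ with entropy $\le\lambda(\SS^2\times\RR)$ are $\RR^3$, $\SS^3$, and $\SS^2\times\RR$; and since $\lambda(\SS^2\times\RR)<2$, every tangent flow along a weak flow with entropy below this bound is multiplicity-one.

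First I would reduce to the strict inequality. Entropy is continuous under $C^\infty$-convergence of closed hypersurfaces, so if $\lambda(M)<\lambda(\SS^2\times\RR)$ we may take $M'=M$. If $\lambda(M)=\lambda(\SS^2\times\RR)$, the supremum defining $\lambda(M)$ is attained, at a compact set of centers and scales, since $M$ is compact; and since by the classification there is no closed shrinker with entropy $\lambda(\SS^2\times\RR)$, $M$ is not a self-shrinker about any maximizing configuration. Hence a suitable $C^\infty$-small normal perturbation strictly decreases the Gaussian-area functional at each maximizing configuration, producing $M'$ with $\lambda(M')<\lambda(\SS^2\times\RR)$. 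In all cases $M'$ is connected.

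Next I would analyze the flow $M'(t)$. Let $T_1$ be its first singular time; on $[0,T_1)$ the flow is smooth and, since $M'$ is connected, so is $M'(t)$. Huisken monotonicity bounds the Gaussian density everywhere by $\lambda(M')<\lambda(\SS^2\times\RR)$, so by the classification the only possible tangent flow at $T_1$ is the round $\SS^3$ (a multiplicity-one plane corresponds to a regular point, and $\SS^2\times\RR$ has density too large). Thus for $t\uparrow T_1$, $M'(t)$ is, after parabolic rescaling, $C^\infty$-close near the singular point to a shrinking round $\SS^3$; since that model is closed and confined to an arbitrarily small ball while $M'(t)$ is connected, $M'(t)$ itself must be globally $C^\infty$-close to that small round sphere, hence convex, hence by Huisken's convergence theorem it contracts to the singular point. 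Therefore $M'(t)$ is completely smooth on $[0,T_1)$ and disappears in a round point at $T_1$.

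The step that needs real care is the borderline case $\lambda(M)=\lambda(\SS^2\times\RR)$: one must verify that the entropy of a closed hypersurface which is not a self-shrinker realizing its own entropy can be strictly lowered by a small $C^\infty$-perturbation --- subtle if the maximizing configurations form a nondiscrete set --- and invoke the nontrivial fact that no closed shrinker sits at the cylinder level. A more robust alternative, in the spirit of this paper's density-drop argument, is to run the generic-perturbation scheme directly; in this entropy range the non-generic stratum is empty by the classification, so that scheme produces $M'$ with only multiplicity-one spherical and cylindrical singularities without raising entropy, after which the same monotonicity-plus-connectedness argument excludes the cylinders and concludes. The low-entropy hypothesis is exactly what makes the shrinker list short enough for this to be essentially automatic.
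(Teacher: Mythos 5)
Your primary route rests on an input that is not available: the claim that the only self-shrinkers in $\RR^4$ with entropy at most $\lambda(\SS^2\times\RR)$ are $\RR^3$, $\SS^3$, and $\SS^2\times\RR$. No such classification is known. The Bernstein--Wang classification of low-entropy shrinkers \cite{BernsteinWang:TopologicalProperty} is a statement about shrinkers in $\RR^3$ (it relies on Brendle's genus-zero classification), and it is precisely the absence of any analogue in $\RR^4$ --- for instance, one cannot currently rule out asymptotically conical shrinkers in $\RR^4$ with entropy below $\lambda(\SS^2)$ --- that forces one to \emph{perturb away} non-generic singularities rather than show they do not occur. That is the entire content of Corollary \ref{coro:main-4d}, whose hypotheses $(\dagger_{3,\Lambda})$ and $(\dagger\dagger_{3,\Lambda})$ deliberately involve only minimal cones in $\RR^4$ (excluded via the Willmore conjecture \cite{MarquesNeves}) and shrinkers in $\RR^3$ (classified via \cite{BernsteinWang:TopologicalProperty}), i.e.\ objects one dimension down. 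For the same reason, your reduction to strict inequality should not lean on ``no closed shrinker sits at the cylinder level'' in $\RR^4$; the argument actually used (see the discussion preceding Lemma \ref{lemm:dens-drop}) is that either $M$ is a round sphere, or else a short time of smooth flow, respectively the Colding--Minicozzi entropy-decreasing perturbation of a non-round closed shrinker \cite{ColdingMinicozzi:generic}, yields $\lambda(M')<\lambda(M)\leq\lambda(\SS^2\times\RR)$.

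Your fallback paragraph does land on the paper's actual proof --- apply Corollary \ref{coro:main-4d} to obtain $M'$ with $\sing\cM'=\sing_{\textrm{gen}}\cM'$, then observe that $\lambda(M')<\lambda(\SS^2\times\RR)=\lambda(\SS^2)<\lambda(\SS^1)$ together with Huisken monotonicity excludes both $\SS^2\times\RR$ and $\SS^1\times\RR^2$ as tangent flows, leaving only multiplicity-one $\SS^3$, after which your connectedness argument gives extinction in a round point. However, the reason you give for why this scheme applies (``in this entropy range the non-generic stratum is empty by the classification'') is circular: if that were known, no perturbation would be needed. The scheme works because the density-drop argument of Section \ref{sec:ent-drop} removes whatever non-generic singularities may be present, not because they are known to be absent.
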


One of the consequences of this paper is a simplified proof of Theorem \ref{theo:CCMS-low-ent-sch} (see also the stronger version stated in Corollary \ref{coro:impr-schoenflies}). 

\subsection{Main results}
We now describe our main results in full generality. We construct generic mean curvature flows of sufficiently low-entropy hypersurfaces in all dimension. To quantify the low-entropy condition we make several definitions.\footnote{The definitions here are closely related to the hypotheses $(\star_{n,\Lambda}),(\star\star_{n,\Lambda})$ introduced by Bernstein--Wang (cf.\ \cite{BernsteinWang:topology-small-ent,BernsteinWang:schoenflies}), but our second hypothesis is less restrictive.} Let $\cS_n$ denote the set of smooth self-shrinkers in $\RR^{n+1}$ with $\lambda(\Sigma) < \infty$, i.e., properly embedded hypersurfaces $\Sigma$ satisfying $\bH + \frac{\bx^\perp}{2} = 0$ with finite Gaussian area. Let $\cS_n^*$ denote the non-flat elements of $\cS_n$. For $\Lambda > 0$, let 
\[
\cS_n(\Lambda) := \{\Sigma \in \cS_n : \lambda(\Sigma) < \Lambda\}, \qquad \cS_n^*(\Lambda) : = \cS_n(\Lambda) \cap \cS_n^*. 
\]
We also define
\[
\cS^\textrm{gen}_n : = \left\{O(\SS^{j}(\sqrt{2j}) \times \RR^{n-j}) \in \cS_n : j = 1,\dots,k, \, O \in O(n+1) \right\}
\]
to be the set of (round) self-shrinking spheres and cylinders in $\RR^{n+1}$. 

Similarly, we let $\cR\cM\cC_n$ denote the space of regular minimal cones in $\RR^{n+1}$, i.e., the set of $\cC \subset \RR^{n+1}$ with $\cC\setminus\{\bOh\}$ a smooth properly embedded hypersurface invariant under dilations and having vanishing mean curvature. Let $\cR\cM\cC_n^*$ denote the non-flat elements of $\cR\cM\cC_n$. Define 
\[
\cR\cM\cC_n(\Lambda) := \{\cC \in \cR\cM\cC_n : \lambda(\cC) < \Lambda\}, \quad \cR\cM\cC_n^*(\Lambda) : = \cR\cM\cC_n(\Lambda) \cap \cR\cM\cC_n^*. 
\]
For a dimension $n\geq 2$ and entropy bound $\Lambda \in (\lambda(\SS^n),2]$, our first hypothesis is
\[
\tag{$\dagger_{n,\Lambda}$} \textrm{For $3\leq k \leq n$, $\cR\cM\cC^*_k(\Lambda) = \emptyset$} 
\]
while our second hypothesis is 
\[
\tag{$\dagger\dagger_{n,\Lambda}$}  \cS_{n-1}^*(\Lambda) \subset  \cS^\textrm{gen}_{n-1} . 
\]

Finally, we define certain notation that will be used throughout.    
\begin{definition}\label{defi:ff-sing-gen}
For a closed embedded hypersurface $M^n\subset \RR^{n+1}$ we denote by $\fF(M)$ the set of cyclic\footnote{ Recall that a integral varifold $V$ is cyclic if the unique mod $2$ flat chain $[V]$ has $\partial [V] = 0$. Work of White \cite{White:cyclic} shows that this property is preserved under varifold (and Brakke flow) convergence.} unit-regular integral Brakke flows $\cM$ with $\cM(0) = \cH^n\lfloor M$, and for each $\cM \in \fF(M)$, we define $\sing_\textrm{gen} \cM \subset \sing\cM$ to be the set of singular points $(\bx,t)$ so that some\footnote{Note that if some tangent flow is a multiplicity one element of $\cS^{\textrm{gen}}_n$ then all are by \cite{ColdingIlmanenMinicozzi,ColdingMinicozzi:uniqueness-tangent-flow}, cf.\ \cite{BernsteinWang:high-mult-unique}.} tangent flow to $\cM$ at $(\bx,t)$ is a multiplicity-one flow associated to elements of $\cS^\textrm{gen}_n$.
\end{definition}
Having given these definitions, we can now state our main technical result. By convention we take $\lambda(\SS^0) = 2$. Everywhere below, $M$ is taken to be closed and embedded.
\begin{theorem}\label{theo:main-n-dim}
Assume that $n \geq 2$ and $\Lambda \in (\lambda(\SS^n),\lambda(\SS^{n-2})]$ satisfy hypothesis $(\dagger_{n,\Lambda})$ and $(\dagger\dagger_{n,\Lambda})$. If $M^n\subset \RR^{n+1}$ has $\lambda(M) \leq \Lambda$ then there exist arbitrarily small $C^\infty$ graphs $M'$ over $M$ so that $\lambda(M')< \Lambda$ and all $\cM' \in \fF(M')$ have $\sing\cM' = \sing_\textnormal{gen}\cM'$. In particular, the level set flow of $M'$ does not fatten. 
\end{theorem}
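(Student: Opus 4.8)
The plan is to argue globally in space-time by an iteration built on three pieces: (i) a partial-regularity dichotomy forced by the low-entropy hypotheses, (ii) the one-sided perturbation technology of \cite{CCMS:generic1} together with Proposition \ref{prop:geometric-generic}, packaged as a \emph{density-drop lemma}, and (iii) a compactness argument that makes the iteration terminate. Throughout, for a Brakke flow $\cM$ and $(\bx,t)$ write $\Theta_\cM(\bx,t)$ for the Gaussian density, and set $\Theta^*(\cM):=\sup\{\Theta_\cM(\bx,t):(\bx,t)\in\sing\cM\setminus\sing_\textrm{gen}\cM\}$, with the convention $\Theta^*(\cM)=1$ when $\sing\cM=\sing_\textrm{gen}\cM$. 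The goal is then to produce a small graph $M'$ over $M$ with $\Theta^*(\cM')=1$ for every $\cM'\in\fF(M')$.

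\emph{Step 1 (partial regularity).} First I would show that, under $(\dagger_{n,\Lambda})$, $(\dagger\dagger_{n,\Lambda})$ and $\lambda(M)\le\Lambda\le 2$, every $\cM\in\fF(N)$ for any closed $N$ with $\lambda(N)\le\Lambda$ has the property that each point of $\sing\cM\setminus\sing_\textrm{gen}\cM$ admits a tangent flow which is a \emph{multiplicity-one, smooth} self-shrinker $\Sigma\in\cS_n^*(\Lambda)$ that is compact or asymptotically conical and is not a round sphere or cylinder. Indeed: $(\dagger_{n,\Lambda})$ forces, by dimension reduction, every self-shrinker of entropy $<\Lambda$ to be smooth and every static tangent flow to be a plane; the bound $\Lambda\le 2$ excludes higher multiplicity (a multiplicity-two plane has density exactly $2$, and any higher-multiplicity non-flat shrinker has density $\ge 2\lambda(\SS^n)>2$), with the borderline $\Lambda=2$ in $\RR^3$ disposed of using the classification of low-entropy shrinkers; and $(\dagger\dagger_{n,\Lambda})$ guarantees that a tangent flow splitting off an $\RR$ factor is a round cylinder. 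By White's compactness theorem for self-shrinkers, the $\Sigma$ arising this way have Gaussian density $\ge\lambda_0$ for some $\lambda_0=\lambda_0(n,\Lambda)>1$. In particular $\Theta^*(\cM)\in\{1\}\cup[\lambda_0,\Lambda]$ always, and $\sing\cM=\sing_\textrm{gen}\cM$ precisely when $\Theta^*(\cM)<\lambda_0$.

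\emph{Step 2 (density-drop lemma).} Proposition \ref{prop:geometric-generic} provides, for each non-generic $\Sigma$ as in Step 1, a one-sided normal graph $\Sigma_s$ (for small $s>0$) with $\lambda(\Sigma_s)<\lambda(\Sigma)$: for compact $\Sigma$ this is a push-off along the positive first eigenfunction of the stability operator, and for asymptotically conical $\Sigma$ one uses the lowest eigenfunction with controlled decay. Feeding these one-sided perturbations into the ancient one-sided flow framework of \cite{CCMS:generic1}, and taking a generic linear combination over the finitely many shrinker types of density in any fixed compact window (White compactness), I would obtain the following: for each $\theta\in(\lambda_0,\Lambda]$ there is $c=c(n,\Lambda)>0$ such that if $N$ is closed with $\lambda(N)\le\Lambda$ and $\Theta^*(\cM)\le\theta$ for all $\cM\in\fF(N)$, then there exist arbitrarily small $C^\infty$ graphs $N'$ over $N$ with $\Theta^*(\cM')\le\max\{\lambda_0-\tfrac12,\,\theta-c\}$ for all $\cM'\in\fF(N')$. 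Morally: at every would-be non-generic singularity of density close to $\theta$, Huisken monotonicity bounds the Gaussian density of the perturbed flow by $\lambda(\Sigma_s)<\theta$, while the perturbation raises densities elsewhere by at most $o(1)$, so the maximal non-generic density genuinely drops by a definite amount. This is precisely where the classification of ancient one-sided flows from \cite{CCMS:generic1} is replaced by the soft density estimate.

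\emph{Step 3 (iteration) and the main obstacle.} Starting from $M=M^{(0)}$, I would iterate Step 2: given a small graph $M^{(j)}$ over $M$ with $\theta_j:=\sup_{\cM\in\fF(M^{(j)})}\Theta^*(\cM)\ge\lambda_0$, Step 2 produces a small graph $M^{(j+1)}$ over $M^{(j)}$ — hence, composing graphs, a small graph over $M$ — with $\theta_{j+1}\le\theta_j-c$; since $c$ is uniform on $[\lambda_0,\Lambda]$ and all $\theta_j\in\{1\}\cup[\lambda_0,\Lambda]$, after finitely many steps $\theta_j<\lambda_0$, i.e.\ $\sing\cM=\sing_\textrm{gen}\cM$ for all $\cM\in\fF(M^{(j)})$, which proves the theorem for $M$. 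The step I expect to be the main obstacle is the compactness input underlying Step 2: one must show that if every sufficiently small graph over $N$ carries a flow with a non-generic singularity of density $\ge\theta$, then a subsequential Brakke limit $\cM_\infty\in\fF(N)$ (using closedness of $\fF$ under such limits, \cite{White:cyclic}, together with upper semicontinuity of Gaussian density) has an \emph{honest} non-generic singularity of density $\ge\theta$ at which to perform the one-sided perturbation. The delicate point there is excluding that the limiting singularity is spherical or cylindrical — this uses that spherical and cylindrical singularities are stable under Brakke-flow convergence (via mean-convex neighborhoods), since otherwise the approximating flows would have only generic singularities nearby, contradicting the choice of approximants. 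A secondary nuisance is keeping the entropy of the perturbed hypersurfaces at most $\Lambda$ (or $\Lambda+o(1)$, which suffices by the strict inequalities in $(\dagger_{n,\Lambda})$ and $(\dagger\dagger_{n,\Lambda})$) when $\lambda(M)=\Lambda$; this is arranged by choosing the perturbations one-sided so that the flow's entropy is essentially non-increasing.
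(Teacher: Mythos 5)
Your overall architecture (a globally-defined ``worst non-generic density'' $\Theta^*$, a uniform density-drop lemma, and a finite iteration terminating via Brakke regularity) matches the paper's Section \ref{sec:proof-main}, and your Step 3 compactness concerns are exactly what Lemma \ref{lemm:dens-drop} and Proposition \ref{prop:stab-sing-ent-drop} handle. But Step 2 — the heart of the argument — has a genuine gap, and it stems from a misreading of Proposition \ref{prop:geometric-generic}. That proposition does \emph{not} produce an entropy-decreasing one-sided graph $\Sigma_s$ over a non-generic shrinker (that is the Colding--Minicozzi instability statement from \cite{ColdingMinicozzi:generic}); it is a rigidity statement: if the rescaled shrinker flow of $\Sigma$, recentered at some spacetime point $(\bx_0,t_0)\neq(\bOh,0)$, stays inside the \emph{closure} of one side of $\Sigma$, then $\Sigma$ is the round sphere or splits a line. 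The paper's density-drop mechanism (Proposition \ref{prop:density-drop-effective}) is a soft compactness/contradiction argument: if ancient low-entropy flows weakly on one side of $t\mapsto\sqrt{-t}\,\Sigma_i$ had density nearly $F(\Sigma_i)$ somewhere off the origin, the limit flow would be an exact shifted copy $\sqrt{t_0-t}\,\Sigma+\bx_0$ trapped in $\sqrt{-t}\,\bar\Omega$, and Proposition \ref{prop:geometric-generic} plus $(\dagger\dagger_{n,\Lambda})$ would force $\Sigma\in\cS^{\textrm{gen}}_n$, a contradiction. No explicit perturbation of the shrinker, and no ancient one-sided flow, is ever constructed; the one-sided condition is supplied for free by the avoidance principle applied to the leaves of a foliation $\{M_s\}$ near $M$.

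By contrast, your Step 2 routes everything through the eigenfunction push-off and the ancient one-sided flow framework of \cite{CCMS:generic1}. Besides being precisely the machinery this paper is designed to avoid, that framework is only available for \emph{compact or asymptotically conical} shrinkers, and your Step 1 claim that every non-generic tangent flow in this low-entropy regime is compact or asymptotically conical is unjustified: $(\dagger_{n,\Lambda})$ and $(\dagger\dagger_{n,\Lambda})$ do not exclude smooth non-generic shrinkers with cylindrical ends that do not globally split a line (the asymptotic trichotomy of \cite{Wang:ends-conical} is a surface result and in any case allows cylindrical ends). The paper's argument needs no such structural information — Proposition \ref{prop:density-drop-effective} applies to arbitrary $\Sigma\in\cS_n^*(\Lambda-\eps)\setminus\cS_n^{\textrm{gen}}$. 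A secondary soft spot is your claim that the perturbation ``raises densities elsewhere by at most $o(1)$'' while producing a \emph{uniform} drop $c$ at the bad point: making both uniform over all competing flows in $\fF$, all non-generic singular points, and all shrinker types simultaneously is exactly the content of the contradiction-compactness arguments in Proposition \ref{prop:density-drop-effective} and Lemma \ref{lemm:dens-drop}, and cannot be waved through by ``a generic linear combination over finitely many shrinker types.'' To repair the proof you should replace your Step 2 by the rigidity-plus-compactness argument sketched above.
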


See \cite[Section 1.2]{CCMS:generic1} for a discussion of results related to the regularity of flows satisfying $\sing\cM' = \sing_\textrm{gen}\cM'$.

In low dimensions, the hypothesis $(\dagger_{n,\Lambda})$ and $(\dagger\dagger_{n,\Lambda})$ can be understood more concretely. This leads to the following results.

\begin{corollary}\label{coro:main-3d}
If $M^2\subset \RR^{3}$ has $\lambda(M) \leq 2$ then there exist arbitrarily small $C^\infty$ graphs $M'$ over $M$ so that the level-set flow  of $M'$ is non-fattening and the associated Brakke flow $\cM'\in \fF(M')$ has $\sing\cM' = \sing_\textnormal{gen}\cM'$. 
\end{corollary}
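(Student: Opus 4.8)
The plan is to derive Corollary \ref{coro:main-3d} from Theorem \ref{theo:main-n-dim} by checking that the hypotheses $(\dagger_{2,2})$ and $(\dagger\dagger_{2,2})$ hold with $n=2$, $\Lambda = 2$, and then upgrading the conclusion to a statement about the level-set flow. First I would verify the entropy normalization: $\lambda(\SS^2) = 4/e < 2$, so $\Lambda = 2 \in (\lambda(\SS^2), 2]$ is admissible. For $(\dagger_{2,2})$ the condition ``for $3 \leq k \leq n$'' is vacuous since $n = 2$, so nothing is required. For $(\dagger\dagger_{2,2})$ we must show $\cS_1^*(2) \subset \cS_1^{\mathrm{gen}}$, i.e., that every non-flat self-shrinking curve in $\RR^2$ with entropy $< 2$ is a round circle (up to rotation). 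This is exactly Abresch--Langer's classification \cite{AbreschLanger} of closed self-shrinking curves combined with the entropy computation: all the non-circular Abresch--Langer curves have entropy $\geq 2$ (equivalently, self-intersect or have Gaussian density $\geq 2$), and the only properly embedded non-compact self-shrinking curves are lines, which are flat. So $(\dagger\dagger_{2,2})$ reduces to a citation.

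With the hypotheses checked, Theorem \ref{theo:main-n-dim} produces arbitrarily small $C^\infty$ graphs $M'$ over $M$ so that every $\cM' \in \fF(M')$ has $\sing \cM' = \sing_{\mathrm{gen}} \cM'$. It remains to address non-fattening. Here I would invoke the standard fact (going back to Ilmanen, and used in this form in \cite{CCMS:generic1}) that if the level-set flow of $M'$ fattened, then one could find a nontrivial region swept out with positive measure, and on such a region one can run a Brakke flow whose singular set is not purely generic --- more precisely, non-fattening follows once one knows all Brakke flows in $\fF(M')$ have only multiplicity-one spherical and cylindrical singularities, since such flows are smooth away from a set of the right dimension and are forced to agree, so the level-set flow cannot develop an interior. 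Concretely, spherical and cylindrical singularities are ``regular'' in the sense that the level-set flow does not fatten at them (this is classical for neckpinches and round points), and the generic singular set has parabolic Hausdorff dimension $\leq n-1$, which is too small to support fattening; one then concludes the level-set flow is non-fattening and the weak flow is unique, so $\fF(M')$ is a singleton $\{\cM'\}$ with $\sing \cM' = \sing_{\mathrm{gen}} \cM'$.

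The main obstacle is the non-fattening upgrade: Theorem \ref{theo:main-n-dim} is stated purely in terms of Brakke flows in $\fF(M')$, and one must argue that purely generic singularities preclude fattening. I expect this to follow from combining (i) the fact that near a multiplicity-one spherical or cylindrical singularity the flow is mean-convex on one side and hence the level-set flow is non-fattening there by White's or the Evans--Spruck--Ilmanen theory, with (ii) a dimension-reduction / stratification argument bounding the size of $\sing_{\mathrm{gen}} \cM'$, so that the ``fattening set'' --- which would have to be large --- is empty. An alternative, cleaner route is to cite the resolution of this exact point in \cite{CCMS:generic1} (where the analogous implication ``only generic singularities $\Rightarrow$ non-fattening'' is established in the low-entropy setting in $\RR^3$), since the hypotheses there are weaker than $\lambda(M) \leq 2$ only in the constant, and the fattening argument is insensitive to the precise entropy bound. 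Everything else is bookkeeping: translating $(\dagger), (\dagger\dagger)$ into the two-dimensional classifications and quoting Theorem \ref{theo:main-n-dim}.
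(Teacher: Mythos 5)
Your verification of the hypotheses is exactly the paper's: $(\dagger_{2,2})$ is vacuous because the range $3\leq k\leq n$ is empty for $n=2$, and $(\dagger\dagger_{2,2})$ follows from the Abresch--Langer classification \cite{AbreschLanger}. The only point where your argument diverges from the paper --- and where it is not quite right as written --- is the non-fattening upgrade. You assert that non-fattening at multiplicity-one spherical and cylindrical singularities is ``classical'' and attribute it to White or Evans--Spruck--Ilmanen, with a fallback via a dimension count on the singular set. Neither of these is the actual mechanism. Non-fattening at a neckpinch of a general (non-mean-convex) initial surface was open until the resolution of the mean convex neighborhood conjecture by Choi--Haslhofer--Hershkovits \cite{ChoiHaslhoferHershkovits}, which is precisely what the paper cites here: once every singular point has a mean convex neighborhood, non-fattening follows (cf.\ \cite{HershkovtisWhite}). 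A bound on the parabolic Hausdorff dimension of $\sing_{\textrm{gen}}\cM'$ does not by itself preclude fattening --- fattening is a statement about the level-set flow developing interior, and the known arguments ruling it out go through one-sided/mean-convex structure near each singularity, not through the size of the singular set.

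That said, your instinct about where the difficulty lies is correct, and your clause ``near a multiplicity-one spherical or cylindrical singularity the flow is mean-convex on one side'' is the right statement --- it just needs to be recognized as the (deep, recent) mean convex neighborhood theorem rather than folklore. With the citation to \cite{ChoiHaslhoferHershkovits} in place of the ``classical'' claim and the dimension-count heuristic, your proof coincides with the paper's.
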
 
\begin{proof}
Condition $(\dagger_{2,2})$ is vacuous while $(\dagger\dagger_{2,2})$ holds by the classification of self-shrinking curves \cite{AbreschLanger}. 
\end{proof}

\begin{corollary}\label{coro:main-4d}
If $M^3 \subset \RR^4$ has $\lambda(M) \leq \lambda(\SS^1\times \RR^2)$ then there exist arbitrarily small $C^\infty$ graphs $M'$ over $M$ so that the level-set flow of $M'$ is non-fattening and the associated Brakke flow  $\cM' \in \fF(M')$ has $\sing\cM' = \sing_\textnormal{gen}\cM'$.
\end{corollary}
\begin{proof}
By the resolution of the Willmore conjecture \cite{MarquesNeves}, $\cR\cM\cC^{*}_{3}(\Lambda_\cC) = \emptyset$ for 
\[
\Lambda_\cC = \frac{2\pi^2}{4\pi} \approx 1.57 > \lambda(\SS^1) \approx 1.52.
\]
Thus $(\dagger_{3,\Lambda})$ holds for all $\Lambda \leq \Lambda_\cC$. Furthermore, by the classification of low-entropy shrinkers in $\RR^3$ from \cite{BernsteinWang:TopologicalProperty}, it holds that $\cS^*_2(\lambda(\SS^1)) = \cS^\textrm{gen}_2$. Thus $(\dagger\dagger_{3,\lambda(\SS^1)})$ holds. 
\end{proof}

\subsection{Generic mean curvature flow with surgery}\label{subsec:surgery}
As already observed in \cite{CCMS:generic1}, we can apply Corollary \ref{coro:main-4d} to give a direct proof of Theorems  \ref{theo:BW-schoenflies} and \ref{theo:CCMS-low-ent-sch}. Moreover, Daniels-Holgate has recently proven that if an initial hypersurface admits a (cyclic, unit-regular, integral) Brakke flow with only\footnote{The spherical and neckpinch singularities are the tangent flows for which a canonical neighborhood theorem is proven, thanks to \cite{ChoiHaslhoferHershkovits,ChoiHaslhoferHershkovitsWhite}.} spherical and neckpinch type singularities\footnote{Note that if $\cM'$ is such a Brakke flow in $\RR^{n+1}$ and $\sing\cM'=\sing_{\textrm{gen}}\cM'$, then the condition ``$\cM'$ has only spherical and neckpinch singularities'' is a consequence of $\lambda(\cM') < \lambda(\SS^{n-2})$.} then it is possible to construct a smooth mean curvature flow with surgery starting from this initial condition (see \cite{Daniels-Holgate} for the precise definition of mean curvature flow with surgery).

As such, Corollaries \ref{coro:main-3d} and \ref{coro:main-4d} combined with \cite[Theorem 1.2]{Daniels-Holgate} yields the following generic surgery construction. 
\begin{corollary}[Generic mean curvature flow with surgery]
Assume that $n\geq 2$ and $\Lambda \in (\lambda(\SS^{n}),\lambda(\SS^{n-2})]$ satisfy $(\dagger_{n,\Lambda})$ and $(\dagger\dagger_{n,\Lambda})$. If $M^{n}\subset \RR^{n+1}$ has $\lambda(M)\leq\Lambda$, then there is an arbitrarily small $C^{\infty}$ graph $M'$ over $M$ and a smooth mean curvature flow with surgery starting from $M'$.
\end{corollary}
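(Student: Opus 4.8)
The plan is to obtain this as a direct consequence of Theorem~\ref{theo:main-n-dim} together with the surgery construction of Daniels--Holgate \cite{Daniels-Holgate}: Theorem~\ref{theo:main-n-dim} supplies a graphical perturbation $M'$ whose (generic) singularities are all round spheres and cylinders, and the entropy ceiling $\Lambda\le\lambda(\SS^{n-2})$ then forces those cylinders to have only a single Euclidean factor, i.e.\ to be neckpinches, which is precisely the class of singularities to which \cite[Theorem~1.2]{Daniels-Holgate} applies.

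I would begin by applying Theorem~\ref{theo:main-n-dim}. Since $\lambda(\SS^{n-2})\le 2$, the hypothesis $\Lambda\in(\lambda(\SS^n),2]$ of that theorem holds, and $(\dagger_{n,\Lambda})$, $(\dagger\dagger_{n,\Lambda})$ are assumed; hence for every $\eps>0$ there is a $C^\infty$ graph $M'$ over $M$ with $\|M'-M\|_{C^\infty}<\eps$ such that every $\cM'\in\fF(M')$ satisfies $\sing\cM'=\sing_\textrm{gen}\cM'$. Because the entropy of a closed hypersurface varies continuously under $C^\infty$ convergence and $\lambda(M)\le\Lambda\le\lambda(\SS^{n-2})$, after shrinking $\eps$ I can additionally arrange $\lambda(M')<\lambda(\SS^{n-2})$ — automatically once $\lambda(M)<\lambda(\SS^{n-2})$. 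The only case requiring care is $\lambda(M)=\lambda(\SS^{n-2})$: here $M$ is not a round sphere (those have entropy $\lambda(\SS^n)<\lambda(\SS^{n-2})$), so one may first replace $M$ by a small graphical perturbation of strictly smaller entropy — for instance by running mean curvature flow for a short time if $M$ is not a self-shrinker, or by perturbing in an unstable direction if it is — and then apply the above to the perturbed hypersurface. For $n=2$ this entropy step is vacuous, since then $\sing_\textrm{gen}$ only ever records spherical and neckpinch singularities.

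Next I would select any $\cM'\in\fF(M')$; the set $\fF(M')$ is nonempty by elliptic regularization, and the cyclic and unit-regular properties are standard (cf.\ \cite{CCMS:generic1}). By Huisken monotonicity, $\lambda(\cM')\le\lambda(M')<\lambda(\SS^{n-2})$. Now let $(\bx,t)\in\sing\cM'=\sing_\textrm{gen}\cM'$; some tangent flow to $\cM'$ there is a multiplicity-one flow associated to $O(\SS^j(\sqrt{2j})\times\RR^{n-j})$ for some $j\in\{1,\dots,n\}$ and $O\in O(n+1)$, and its Gaussian density $\lambda(\SS^j)$ is at most $\lambda(\cM')$. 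If $j\le n-2$, then Stone's ordering $\lambda(\SS^1)>\dots>\lambda(\SS^n)$ gives $\lambda(\SS^{n-2})\le\lambda(\SS^j)\le\lambda(\cM')<\lambda(\SS^{n-2})$, a contradiction. Hence $j\in\{n-1,n\}$, i.e.\ every singularity of $\cM'$ is a round spherical singularity ($j=n$) or a neckpinch ($j=n-1$, tangent flow $O(\SS^{n-1}(\sqrt{2(n-1)})\times\RR)$). Finally, \cite[Theorem~1.2]{Daniels-Holgate} applied to the cyclic, unit-regular, integral Brakke flow $\cM'$, all of whose singularities are spherical or neckpinch type, yields a smooth mean curvature flow with surgery starting from $M'$, as desired.

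Essentially all of the substance is packaged into Theorem~\ref{theo:main-n-dim} and \cite{Daniels-Holgate}; the only point one must be careful about is the passage from the entropy bound to ``only spherical and neckpinch singularities'', which hinges on the \emph{strict} inequality $\lambda(\cM')<\lambda(\SS^{n-2})$ and hence on correctly disposing of the borderline value $\lambda(M)=\lambda(\SS^{n-2})$ described above.
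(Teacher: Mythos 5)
Your proposal is correct and is exactly the argument the paper intends: apply Theorem \ref{theo:main-n-dim}, observe that $\lambda(\cM')<\lambda(\SS^{n-2})$ forces every generic singularity to be spherical or a neckpinch (this is precisely the paper's footnote on this point), and invoke \cite[Theorem 1.2]{Daniels-Holgate}. The borderline case $\lambda(M)=\lambda(\SS^{n-2})$ that you treat separately is in fact already disposed of inside the paper's proof of Theorem \ref{theo:main-n-dim}, whose initial perturbation arranges $\lambda(M')\leq\Lambda-\eps$ unless $M$ is a round sphere, so the strict inequality you need comes for free.
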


In particular, when $M^{3}\subset \RR^{4}$ is an embedded $3$-sphere with $\lambda(M) \leq \lambda(\SS^{1}\times \RR^{2})$, the mean curvature flow with surgery can be used (see \cite[Theorem 6.4]{Daniels-Holgate}) to construct an isotopy to the round $3$-sphere. This yields the strengthened version of the low-entropy Schoenflies theorem stated in Corollary \ref{coro:impr-schoenflies}.

\begin{remark} In the setting of $2$-convex mean curvature flow with surgery (see \cite{HuiskenSinnestrari:MCF-mean-convex,HuiskenSinnestrari:MCF-mean-convex,Brendle:inscribed-sharp,BrendleHuisken:R3,HaslhoferKleiner:estimates,HaslhoferKleiner:surgery,ADS,ADS2,BrendleChoi:3d,BrendleChoi:nD}) the surgery to isotopy construction has been studied in several works \cite{HuiskenSinestrari:surgery,BHH:2-convex-spheres,BuzanoHaslhoferHerskovits,Mramor:finiteness-surgery,MramorWang}. (We also mention related work using Ricci flow with surgery \cite{Marques:PSC,CarlottoLi} and singular Ricci flow \cite{BamlerKleiner:unique,BamlerKleiner:difeo,BamlerKleiner:contract}.) 
\end{remark}

\subsection{Generic regularity of area-minimizing hypersurfaces in eight dimensions} 

We remark that the study of generic mean curvature flow in our previous work \cite{CCMS:generic1} can be viewed as the parabolic analogue of the work of Hardt--Simon \cite{HardtSimon:foliation} and Smale \cite{Smale} concerning the generic regularity of area-minimizing hypersurfaces in eight dimensions. In particular, the existence and uniqueness of the ancient one-sided mean curvature flow \cite{CCMS:generic1} is a direct analogue of the existence and uniqueness of the foliation on either side of a regular area minimizing cone, as proven in \cite{HardtSimon:foliation} (see also \cite{Wang:smoothing}). 

In this paper, we develop a new technique based on density drop, that avoids the classification of the ancient one-sided flow. As one might expect, this also yields a new proof of the generic regularity results of Hardt--Simon \cite{HardtSimon:foliation} and Smale \cite{Smale} that avoids the need to classify the foliation. This is discussed further in Appendix \ref{app:HS}. 

\subsection{Organization}
See \cite[Section 2]{CCMS:generic1} for the conventions used in this paper. In Section \ref{sec:ent-drop} we prove entropy drop near non-generic singularities and we use this to prove Theorem \ref{theo:main-n-dim} in Section \ref{sec:proof-main}. Appendices \ref{app:stab-gen} and \ref{app:stab-cross} recall some standard stability results. Appendix \ref{app:local} contains a localized perturbative result. In Appendix \ref{app:HS}, we discuss how the arguments here relate to generic regularity of area-minimizing hypersurfaces in eight dimensions. 

\subsection{Acknowledgments} O.C.~was partially supported by a Sloan Fellowship, a Terman Fellowship, and NSF grants  DMS-1811059 and DMS-2016403.  K.C.~was supported by KIAS Individual Grant MG078901. C.M.~was supported by the NSF grant DMS-2050120 and DMS-2147521. F.S.~was supported by a Leverhulme Trust Research Project Grant RPG-2016-174.  We would like to thank Richard Bamler for some discussions related to weak flows and surgery constructions. Finally we are grateful to the referees for many helpful suggestions concerning

\section{Entropy drop near non-generic singularities}\label{sec:ent-drop}

\begin{lemma}\label{lemm:reg-shrinkers}
Assume that $(\dagger_{n,\Lambda})$ holds for some $\Lambda \leq 2$. Suppose that $V$ is a $F$-stationary cyclic integral $n$-varifold in $\RR^{n+1}$ satisfying $F(V) < \Lambda$. Then, there is $\Sigma \in \cS_n(\Lambda)$ so that $V = \cH^n\lfloor \Sigma$. 
\end{lemma}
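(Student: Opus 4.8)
The plan is to upgrade $V$ to a smooth, multiplicity-one, embedded self-shrinker by running the standard regularity and dimension-reduction machinery for stationary varifolds, the only non-routine input being the classification of low-entropy regular minimal cones packaged into $(\dagger_{n,\Lambda})$. The key observation is that $F$ is (a constant multiple of) the area functional of the smooth conformal metric $g_n := e^{-|\bx|^2/(2n)}\delta$, so $V$ is a stationary integral $n$-varifold in $(\RR^{n+1},g_n)$: Allard's regularity theorem applies, and since the conformal factor freezes to a constant under blow-up, every tangent cone of $V$ (iterated, and after splitting off Euclidean factors) is a \emph{Euclidean} stationary integral cone.

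Before the main argument I would record three facts. First, since $V$ is $F$-stationary, $\lambda(V) = F(V) < \Lambda$ (a standard consequence of Huisken monotonicity: the entropy of a shrinker is realized at the origin with unit scale). Second, entropy is invariant under translations and dilations, is unchanged by taking a product with $\RR$, and satisfies $\lambda(\lim V_i)\le\limsup\lambda(V_i)$ along a sequence of uniformly bounded entropy; hence every iterated tangent cone $C$ coming out of $V$ has $\lambda(C) \le \lambda(V) < \Lambda \le 2$, so $\Theta_C(\bOh) < 2$, and since $\Theta_C \le \Theta_C(\bOh)$ on a cone by monotonicity, $C$ has multiplicity one on $\reg C$. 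Third, cyclicity mod $2$ passes to varifold limits (White) and to the cone factor after splitting off $\RR$, so all of these cones are cyclic.

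Now suppose for contradiction that $\bx_0 \in \sing V$, and let $C$ be a tangent cone to $V$ there. If $\sing C = \emptyset$ then $C$ is a multiplicity-one hyperplane and Allard's theorem makes $\bx_0$ a regular point — a contradiction — so $\sing C \ne \emptyset$. Federer dimension reduction (taking further tangent cones along the singular strata, which always remain non-flat by the same Allard argument) yields an iterated tangent cone of the form $C_0 \times \RR^{n-k}$, where $1 \le k \le n$ and $C_0$ is a non-flat cyclic stationary integral cone in $\RR^{k+1}$ with $\sing C_0 = \{\bOh\}$ and $\lambda(C_0) < \Lambda$. Since $C_0$ is multiplicity one away from $\bOh$, $\supp C_0$ is a non-flat regular minimal cone in $\RR^{k+1}$ with $\lambda(\supp C_0)<\Lambda$, i.e.\ $\supp C_0 \in \cR\cM\cC^*_k(\Lambda)$. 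For $3 \le k \le n$ this directly contradicts $(\dagger_{n,\Lambda})$. For $k = 2$ one has $\cR\cM\cC^*_2(\Lambda) = \emptyset$ for every $\Lambda$, since a regular minimal cone in $\RR^3$ is the cone over a smooth closed embedded geodesic $1$-manifold in $\SS^2$, necessarily a single great circle as distinct great circles intersect, hence a plane. For $k = 1$ we use cyclicity: $C_0$ is a stationary union of rays through $\bOh$ whose total multiplicity is even (mod $2$ cyclicity) and at most $3$ (as $\lambda(C_0) < 2$), hence equals $2$, which by the balancing condition forces a single multiplicity-one line, again flat. In all cases we contradict $\bx_0 \in \sing V$, so $\sing V = \emptyset$.

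Therefore $V$ is a smooth embedded $g_n$-minimal hypersurface, of multiplicity one (from $\Theta_V < 2$), so $V = \cH^n \lfloor \Sigma$ for a smooth properly embedded $\Sigma$ with $\bH + \tfrac{\bx^\perp}{2} = 0$ and $\lambda(\Sigma) = F(V) < \Lambda$, i.e.\ $\Sigma \in \cS_n(\Lambda)$. I expect the main obstacle to be the bookkeeping in the dimension-reduction step — verifying that stationarity, cyclicity, the entropy bound, non-flatness and multiplicity one all survive iterated blow-ups and the splitting off of Euclidean factors — together with the hands-on disposal of the $k = 1,2$ cones that $(\dagger_{n,\Lambda})$ does not cover, where cyclicity is what excludes, e.g., the $Y$-cone in the regime $\Lambda > \tfrac32$.
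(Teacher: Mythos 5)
Your proposal is correct and follows essentially the same route as the paper, which simply cites the dimension-reduction argument of Bernstein--Wang (\cite[Lemma 3.1 and Proposition 3.2]{BernsteinWang:topology-small-ent}) with the single modification that cyclicity is used to exclude the union of three half-hyperplanes as an iterated tangent cone. You have written out that argument in full and correctly isolated the role of cyclicity in the $k=1$ step (the triple-junction cone has density $3/2<2$, so only the mod-$2$ boundary condition rules it out), which is precisely the point the paper flags.
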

\begin{proof}
This follows from the proof of \cite[Lemma 3.1 and Proposition 3.2]{BernsteinWang:topology-small-ent} except the cyclic property of $V$ is used to rule out three half-spaces as a potential iterated tangent cone (cf.\ \cite[Corollary 4.5]{White:cyclic}).
\end{proof}

Recall that Huisken has classified the cylinders $\SS^k(\sqrt{2k})\times \RR^{n-k}$ as the unique smooth embedded self-shrinkers with non-negative mean curvature $H \geq 0$ \cite{Huisken:sing,Huisken:local-global} (the technical assumption of bounded curvature was later removed by Colding--Minicozzi \cite{ColdingMinicozzi:generic}). The following result can be viewed as a geometric  consequence of Huisken's result. It will serve as our key mechanism for perturbing away ``non-generic'' singularities. 

\begin{proposition}\label{prop:geometric-generic}
For $\Sigma \in \cS_n^*$, fix an open set $\Omega\subset \RR^{n+1}$ with $\Sigma = \partial\Omega$. Assume that there is a space-time point $(\bx_0,t_0) \in (\RR^{n+1} \times \RR)\setminus (\bOh,0)$ so that
\begin{equation}\label{eq:geo-generic-assump}
\sqrt{t_0 -t}\, \Sigma + \bx_0 \subset \sqrt{-t} \, \bar \Omega
\end{equation}
for all $t < \min\{0,t_0\}$. Then, one of the following holds:
\begin{enumerate}
\item $\Sigma = \SS^n(\sqrt{2n})$, or
\item $\Sigma = O(\hat\Sigma \times \RR)$ for $\hat\Sigma \in \cS_{n-1}^*$ and $O \in O(n+1)$. 
\end{enumerate}
\end{proposition}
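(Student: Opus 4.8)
The plan is to rescale so that the origin‑centred shrinking flow becomes \emph{static}, and then differentiate the resulting one‑parameter family of inclusions at the static end. First I would divide \eqref{eq:geo-generic-assump} by $\sqrt{-t}>0$: with $\eps:=(-t)^{-1/2}$ the hypothesis becomes
\[
\Sigma_\eps\ :=\ \sqrt{1+t_0\eps^2}\,\Sigma\ +\ \eps\,\bx_0\ \subset\ \overline{\Omega},\qquad \eps\in(0,\eps_*),
\]
where $\eps_*=+\infty$ if $t_0\ge 0$ and $\eps_*=|t_0|^{-1/2}$ if $t_0<0$; note $\Sigma_0=\Sigma=\partial\Omega$. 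Letting $\nu$ be the unit normal of $\Sigma$ pointing out of $\Omega$, I would then check that for $\by\in\Sigma$ the point $\sqrt{1+t_0\eps^2}\,\by+\eps\bx_0\in\Sigma_\eps$ has signed distance to $\Sigma$ (positive towards $\nu$) equal to $\eps\langle\bx_0,\nu(\by)\rangle+O(\eps^2)$ if $\bx_0\ne\bOh$, and to $\tfrac12 t_0\eps^2\langle\by,\nu(\by)\rangle+O(\eps^4)$ if $\bx_0=\bOh$. Since $\Sigma_\eps\subset\overline\Omega$ this is $\le 0$ for small $\eps>0$, so letting $\eps\to0$ gives: $\langle\bx_0,\nu\rangle\le0$ on $\Sigma$ when $\bx_0\ne\bOh$, and $\langle\bx,\nu\rangle$ sign‑definite on $\Sigma$ when $\bx_0=\bOh$ (note $t_0\ne0$ in that case).

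If $\bx_0=\bOh$ I would conclude immediately: $\bH=-\tfrac12\bx^\perp$ gives $\langle\bH,\nu\rangle=-\tfrac12\langle\bx,\nu\rangle$, so the scalar mean curvature of $\Sigma$ has a sign, and after possibly replacing $\nu$ by $-\nu$, Huisken's classification of self‑shrinkers with $H\ge0$ forces $\Sigma=\SS^k(\sqrt{2k})\times\RR^{n-k}$; since $\Sigma\in\cS_n^*$ we have $k\ge1$, so $k=n$ gives conclusion (1) and $1\le k\le n-1$ gives $\Sigma=O(\hat\Sigma\times\RR)$ with $\hat\Sigma=\SS^k(\sqrt{2k})\times\RR^{n-1-k}\in\cS_{n-1}^*$, conclusion (2).

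If $\bx_0\ne\bOh$ the plan is to upgrade ``$\langle\bx_0,\nu\rangle\le0$'' to ``$\langle\bx_0,\nu\rangle\equiv0$''. Once that holds, $\bx_0$ is everywhere tangent to $\Sigma$, so $\Sigma$ is invariant under translation in the $\bx_0$‑direction, hence $\Sigma=O(\hat\Sigma\times\RR)$ with $\hat\Sigma:=\Sigma\cap\bx_0^\perp$ a smooth properly embedded self‑shrinker in $\RR^n$ (the line factor contributing nothing to $\bH$ or $\bx^\perp$) that is non‑flat since $\Sigma$ is, i.e.\ $\hat\Sigma\in\cS_{n-1}^*$ --- conclusion (2). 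To rule out $u:=\langle\bx_0,\nu\rangle\le0$ with $u\not\equiv0$, I would invoke the standard self‑shrinker identity
\[
\cL u+|A|^2 u=0,\qquad \cL u:=\Delta_\Sigma u-\tfrac12\langle\bx,\nabla_\Sigma u\rangle
\]
($\cL$ is the drift Laplacian of $d\mu:=e^{-|\bx|^2/4}\,d\cH^n$; equivalently $\langle\bv,\nu\rangle$ is a $\tfrac12$‑eigenfunction of the stability operator $\cL+|A|^2+\tfrac12$), and integrate against $d\mu$ with no boundary term --- valid because self‑shrinkers with finite entropy have Euclidean volume growth and $\int_\Sigma|A|^2\,d\mu<\infty$. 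This gives $0=\int_\Sigma(\cL u+|A|^2 u)\,d\mu=\int_\Sigma|A|^2 u\,d\mu$, and since $|A|^2 u\le0$ the integrand vanishes, so $|A|\equiv0$ on the open set $\{u<0\}$; by real‑analyticity of self‑shrinkers the component of $\Sigma$ meeting $\{u<0\}$ would then be a hyperplane, contradicting $\Sigma\in\cS_n^*$. Hence $u\equiv0$.

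The last step is, I expect, the main obstacle and the real content: a translational normal field $\langle\bv,\nu\rangle$ cannot be sign‑definite without vanishing on a non‑flat self‑shrinker. This is precisely where Huisken's theorem re‑enters --- it is what makes a flat piece of $\Sigma$ impossible after $\{u<0\}$ is reduced to $|A|=0$ --- and where the weighted geometry of shrinkers (the eigenvalue identity plus the Gaussian integration by parts) is essential. The remaining technical nuisance is that $\Sigma$ could be disconnected (say a hyperplane together with a non‑flat shrinker); I would handle that by running the argument on each component and ruling out ``mixed'' configurations via the blow‑down of \eqref{eq:geo-generic-assump} as $t\to-\infty$, where the two pieces grow at incompatible rates and violate the inclusion.
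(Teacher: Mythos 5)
Your proposal follows the paper's skeleton almost exactly up to the key dichotomy: the rescaled family $\Sigma_\eps=\sqrt{1+t_0\eps^2}\,\Sigma+\eps\bx_0\subset\bar\Omega$ is precisely the one-parameter family the paper differentiates at $\eps=0$, the case $\bx_0=\bOh$ (sign of $H$, then Huisken/Colding--Minicozzi) is handled identically, and the reduction of the case $\bx_0\neq\bOh$ to line-splitting via $\langle\bx_0,\nu\rangle\equiv0$ is the same. Where you genuinely diverge is in upgrading $u:=\langle\bx_0,\nu\rangle\le0$ to $u\equiv0$. The paper argues pointwise: the strong maximum principle applied to $\cL u+|A|^2u=0$ on the connected $\Sigma$ gives either $u\equiv0$ or a strict sign, and in the latter case $\Sigma$ is graphical over $\bx_0^\perp$, hence a hyperplane by Wang's Bernstein theorem \cite{Wang:graph}, contradicting $\Sigma\in\cS_n^*$. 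You instead integrate the Jacobi equation against the Gaussian weight to force $|A|\equiv0$ on $\{u<0\}$ and conclude by analyticity. Both routes work in principle, but note two soft spots in yours. First, the integration by parts is not free: killing the cutoff error terms requires something like $\int_\Sigma|A|^2e^{-|\bx|^2/4}\,d\cH^n<\infty$, which you assert but which is not immediate for a general finite-entropy element of $\cS_n^*$; the strong maximum principle needs no integrability whatsoever, so it is the more robust choice. Second, your worry about disconnected $\Sigma$ is legitimate, but the proposed fix (incompatible growth rates under blow-down) does not make sense for a self-similar configuration, since every component rescales at the same rate. The clean resolution, which the paper uses, is that properly embedded shrinkers are connected by the Frankel property (see \cite[Corollary C.4]{CCMS:generic1}), so the ``mixed'' configuration never arises.
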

Note that if we replaced condition \eqref{eq:geo-generic-assump} with 
\begin{equation}\label{eq:geo-generic-assump-int}
\sqrt{t_0 -t}\, \Sigma + \bx_0 \subset \sqrt{-t} \, \Omega
\end{equation}
(i.e., if we replaced the closure of $\Omega$ with the interior of $\Omega$), we could use an inductive argument to conclude that $\Sigma \in \cS_n^\textrm{gen}$. %However, \eqref{eq:geo-generic-assump} is the condition that will naturally appear when applying Proposition \ref{prop:geometric-generic} to prove density drop near non-generic singularities. 

%A version of Proposition \ref{prop:geometric-generic} could be proven for singular shrinkers $\Sigma$ (with appropriate conditions on their singular set), but in this case one should also allow for static/quasi-static stationary minimal cones in the conclusion of the proposition (unless the stronger condition \eqref{eq:geo-generic-assump-int} was assumed). 

Let us give the geometric intuition underlying our proof strategy. Let $\cM_0$ denote the spacetime track of $t \mapsto \sqrt{-t} \Sigma$ and $\cM$ denote the spacetime track of $t \mapsto \sqrt{t_0 -t}\, \Sigma + \bx_0$. For $\lambda \in (0, 1]$, let $\cM_\lambda$ be the parabolic rescaling of $\cM$ by a factor of $\lambda$; thus, $\cM_1 = \cM$ and, as $\lambda \to 0$, $\cM_\lambda \to \cM_0$ smoothly locally away from $(\bOh, 0)$. Note that $\cM_0$ is invariant under parabolic dilations, so $\cM_\lambda$ always lies weakly to one side of $\cM_0$.

If $\cM_\lambda$ touches $\cM_0$ for some $\lambda > 0$ (equivalently, for all $\lambda > 0$ due to $\cM_0$'s parabolic dilation invariance), it is then a simple consequence of the strong maximum principle and monotonicity that $\Sigma$ splits a line. 

Otherwise, $\cM_\lambda$ was disjoint from $\cM_0$ for all $\lambda \in (0, 1]$. It is then standard to use the height of $\cM_\lambda$ over $\cM_0$ at time $t=-1$, for $\lambda > 0$ small, to produce a kernel element of the linearized operator that is everywhere nonnegative ($\cM_\lambda$ always lies weakly to one side of $\cM_0$). By studying the geometry of parabolic dilations, the kernel element produced is $\bx_0 \cdot \nu_\Sigma$ if $\bx_0 \neq \bOh$ or $\bx \cdot \nu_\Sigma$ if $\bx_0 = \bOh$ ($\implies t_0 \neq 0$). It turns out that the former case implies splitting once again, while the latter implies the mean-convexity of $\Sigma$. 

The proof we give below is a more succinct version of the argument above: it handles both cases in a unified way.

\begin{proof}[Proof of Proposition \ref{prop:geometric-generic}]
Observe that the set $\cup_{t< 0} \sqrt{-t}\, \bar{\Omega} \times \{t\}$ is invariant under parabolic dilation around the space-time origin. We thus conclude that for all $\lambda \in [0,\infty)$ and $t < \min\{0,\lambda^2 t_0\}$, 
\[
\sqrt{\lambda^2t_0 -t}\, \Sigma +\lambda \bx_0 \subset \sqrt{-t} \, \bar\Omega
\]
In particular, taking $t=-1$ and $\lambda\geq0$ small, we have that 
\[
\lambda \mapsto \Sigma_\lambda : = \sqrt{1 + \lambda^2t_0}\, \Sigma +\lambda \bx_0 \subset  \bar\Omega
\]
is a $1$-parameter family of hypersurfaces with $\Sigma_0 = \Sigma = \partial\Omega$. The normal speed at $\lambda =0$ is $\bx_0 \cdot \nu_\Sigma \geq 0$ (where $\nu_\Sigma$ is the unit normal pointing into $\Omega$). Because 
\[
\Delta_\Sigma (\bx_0 \cdot \nu_\Sigma) - \tfrac 12 \bx \cdot \nabla_\Sigma (\bx_0 \cdot \nu_\Sigma) + |A_\Sigma|^2 (\bx_0\cdot\nu_\Sigma) = 0
\]
(cf.\ \cite[Theorem 5.2]{ColdingMinicozzi:generic}), the maximum principle implies that either $\bx_0\cdot\nu_\Sigma >0$ along $\Sigma$ or $\bx_0\cdot \nu_\Sigma = 0$ along $\Sigma$. (Note that $\Sigma$ is connected thanks to the Frankel property of shrinkers, cf.\ \cite[Corollary C.4]{CCMS:generic1}.)

In the first case (i.e., $\bx_0\cdot\nu_\Sigma >0$), each component of $\Sigma$ is a graph over the $\bx_0^\perp$-hyperplane. By \cite{Wang:graph} (cf.\ \cite{EckerHuisken:graphs}), each component of $\Sigma$ must be a hyperplane, so there is only one component and $\Sigma$ is a flat hyperplane. This contradicts the assumption that $\Sigma \in \cS_n^*$ (the set of non-flat shrinkers). 

In the second case (i.e., $\bx_0\cdot \nu_\Sigma = 0$), we see that $\bx_0\in T_p\Sigma$ for all $p\in \Sigma$. In particular, if $\bx_0\neq \bOh$, then $\Sigma$ splits a line in the $\bx_0$-direction. It thus remains to consider the situation in which $\bx_0=\bOh$. If this is the case, then it must hold that $t_0\neq 0$ and we have
\[
\tilde \Sigma_\mu := (1+\mu t_0) \Sigma \subset \bar\Omega. 
\]
 for $\mu\geq 0$ sufficiently small. The normal speed at $\mu=0$ is $t_0 \bx\cdot \nu_\Sigma \geq 0$. Using the shrinker equation, we thus find that $t_0 H_\Sigma \geq 0$. Since $t_0\neq 0$, we can assume that $H_\Sigma\geq 0$. Thus, up to a rotation, $\Sigma = \SS^{k}(\sqrt{2k})\times \RR^{n-k}$ for $k=1,\dots,n$ by \cite[Theorem 10.1]{ColdingMinicozzi:generic}. This completes the proof. 
\end{proof}

Recall the the definition of smoothly crossing Brakke flows in Definition \ref{defi:smooth-crossing}. 
\begin{proposition}\label{prop:density-drop-effective}
Fix $n \geq 2$, $\eps > 0$ and $\Lambda \in (\lambda(\SS^n),2]$ so that $(\dagger_{n,\Lambda})$ and $(\dagger\dagger_{n,\Lambda})$ hold. There is $\delta = \delta(n,\eps,\Lambda) > 0$ with the following property. 

Consider $\Sigma \in \cS_n^*(\Lambda-\eps) \setminus \cS_n^\textnormal{gen}$ and $\tilde \cM$  an ancient cyclic unit-regular integral $n$-dimensional Brakke flow in $\RR^{n+1}$ with $\lambda(\tilde \cM) \leq   F(\Sigma)$ so that $\tilde \cM$ does not smoothly cross the flow $(-\infty,0) \ni t\mapsto \cH^n\lfloor\sqrt{-t}\,\Sigma$. Then, $\Theta_{\tilde\cM}(\bx,t) \leq F(\Sigma) - \delta$ for all $(\bx,t) \in (\RR^{n+1}\times \RR)\setminus (\bOh,0)$. 
\end{proposition}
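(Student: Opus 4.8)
The plan is to argue by contradiction, running a compactness argument simultaneously in the shrinker $\Sigma$ and in the ancient flow $\tilde\cM$. Suppose the conclusion fails for every $\delta\in(0,\delta_0)$. Then there are $\delta_i\downarrow0$, shrinkers $\Sigma_i\in\cS_n^*(\Lambda-\eps)\setminus\cS_n^{\textnormal{gen}}$, ancient cyclic unit-regular integral Brakke flows $\tilde\cM_i$ with $\lambda(\tilde\cM_i)\le F(\Sigma_i)$ not smoothly crossing $t\mapsto\cH^n\lfloor\sqrt{-t}\,\Sigma_i$, and points $(\bx_i,t_i)\neq(\bOh,0)$ with $\Theta_{\tilde\cM_i}(\bx_i,t_i)>F(\Sigma_i)-\delta_i$. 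The reference flow $t\mapsto\cH^n\lfloor\sqrt{-t}\,\Sigma_i$, the entropy $\lambda(\tilde\cM_i)$, the non-smooth-crossing condition, and the Gaussian density at the marked point are all unchanged if $\tilde\cM_i$ is parabolically dilated about the space-time origin, so after such a dilation I may assume $(\bx_i,t_i)$ lies on the compact parabolic unit sphere $\{|\bx|^2+|t|=1\}$.

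I then pass to a subsequential limit. Since $\lambda(\Sigma_i)<\Lambda-\eps\le2$, hypothesis $(\dagger_{n,\Lambda})$ together with Lemma~\ref{lemm:reg-shrinkers} yields the low-entropy compactness of self-shrinkers: after a subsequence, $\Sigma_i\to\Sigma_\infty$ in $C^\infty_{\mathrm{loc}}$ with multiplicity one, where $\Sigma_\infty\in\cS_n(\Lambda-\eps)$ is a smooth properly embedded cyclic self-shrinker and $F(\Sigma_i)\to F(\Sigma_\infty)$ (the Gaussian area of shrinkers of bounded entropy is tight at infinity). The entropy bound supplies local mass bounds, so after a further subsequence $\tilde\cM_i\to\tilde\cM_\infty$, an ancient cyclic unit-regular integral Brakke flow (these properties pass to the limit) with $\lambda(\tilde\cM_\infty)\le F(\Sigma_\infty)$; being confined to the parabolic unit sphere, $(\bx_i,t_i)\to(\bx_\infty,t_\infty)\neq(\bOh,0)$. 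Two properties survive the limit: $\tilde\cM_\infty$ does not smoothly cross $t\mapsto\cH^n\lfloor\sqrt{-t}\,\Sigma_\infty$ (not smoothly crossing is a closed condition under this convergence, cf.\ Definition~\ref{defi:smooth-crossing}), and, by upper semicontinuity of Gaussian density, $\Theta_{\tilde\cM_\infty}(\bx_\infty,t_\infty)\ge\limsup_i\Theta_{\tilde\cM_i}(\bx_i,t_i)\ge\lim_i(F(\Sigma_i)-\delta_i)=F(\Sigma_\infty)$. Since always $\Theta_{\tilde\cM_\infty}(\bx_\infty,t_\infty)\le\lambda(\tilde\cM_\infty)\le F(\Sigma_\infty)$, all of these coincide, so $\Theta_{\tilde\cM_\infty}(\bx_\infty,t_\infty)=\lambda(\tilde\cM_\infty)=F(\Sigma_\infty)$.

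Next I extract rigidity. Because the Gaussian density of $\tilde\cM_\infty$ at $(\bx_\infty,t_\infty)$ equals the entropy, Huisken's monotone quantity based at $(\bx_\infty,t_\infty)$ is constant in scale, so for $t<t_\infty$ the flow $\tilde\cM_\infty$ equals the shrinking flow $t\mapsto\cH^n\lfloor(\sqrt{t_\infty-t}\,\Sigma'+\bx_\infty)$ of an $F$-stationary cyclic varifold $\Sigma'$ with $F(\Sigma')=F(\Sigma_\infty)<\Lambda\le2$; by Lemma~\ref{lemm:reg-shrinkers}, $\Sigma'$ is the weight of a smooth properly embedded self-shrinker. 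I claim $\Sigma'=\Sigma_\infty$. Since $\tilde\cM_\infty$ does not smoothly cross $\sqrt{-t}\,\Sigma_\infty$, since $\Sigma'$ and $\Sigma_\infty$ are connected and any two self-shrinkers intersect (the Frankel property, cf.\ \cite[Corollary C.4]{CCMS:generic1}), and since $\sqrt{-t}\,\Sigma_\infty$ separates $\RR^{n+1}$, for every $t<\min\{0,t_\infty\}$ the connected hypersurface $\sqrt{t_\infty-t}\,\Sigma'+\bx_\infty$ lies in the closure of one and the same component of the complement; writing $\Sigma_\infty=\partial\Omega$ for that component gives $\sqrt{t_\infty-t}\,\Sigma'+\bx_\infty\subset\sqrt{-t}\,\bar\Omega$. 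Dividing by $\sqrt{-t}$ and letting $t\to-\infty$ yields $\Sigma'\subset\bar\Omega$, so by the Frankel property $\Sigma'$ touches $\Sigma_\infty=\partial\Omega$ from within $\bar\Omega$, whereupon the strong maximum principle and unique continuation for the (uniformly elliptic) self-shrinker equation force $\Sigma'=\Sigma_\infty$.

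Finally I reach a contradiction. If $\Sigma_\infty$ were flat then $F(\Sigma_\infty)=1$ and $F(\Sigma_i)\to1$, violating the entropy gap for non-flat self-shrinkers (each $\Sigma_i$ is non-flat); hence $\Sigma_\infty\in\cS_n^*$. By the previous paragraph $\sqrt{t_\infty-t}\,\Sigma_\infty+\bx_\infty\subset\sqrt{-t}\,\bar\Omega$ for all $t<\min\{0,t_\infty\}$ with $(\bx_\infty,t_\infty)\neq(\bOh,0)$, which is exactly hypothesis \eqref{eq:geo-generic-assump}; Proposition~\ref{prop:geometric-generic} then gives $\Sigma_\infty=\SS^n(\sqrt{2n})$ or $\Sigma_\infty=O(\hat\Sigma\times\RR)$ with $\hat\Sigma\in\cS_{n-1}^*$ and $O\in O(n+1)$, and in the latter case $\lambda(\hat\Sigma)=\lambda(\Sigma_\infty)<\Lambda$ forces $\hat\Sigma\in\cS_{n-1}^{\textnormal{gen}}$ by $(\dagger\dagger_{n,\Lambda})$, hence $\Sigma_\infty$ is a round cylinder. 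Either way $\Sigma_\infty\in\cS_n^{\textnormal{gen}}$; but round spheres and round cylinders are isolated among self-shrinkers, so $\Sigma_i=\Sigma_\infty\in\cS_n^{\textnormal{gen}}$ for $i$ large, contradicting $\Sigma_i\notin\cS_n^{\textnormal{gen}}$. This produces a $\delta(n,\eps,\Lambda)>0$; replacing it by $\min\{\delta,\delta_0\}$ finishes the proof. The hard part is the rigidity step: one must upgrade the Huisken equality case to a \emph{smooth} self-shrinker $\Sigma'$ and then exploit the non-crossing hypothesis — the one piece of information that passes to the limit and that produces the one-sided containment \eqref{eq:geo-generic-assump} — via the Frankel property and the maximum principle to identify $\Sigma'$ with $\Sigma_\infty$, after which Proposition~\ref{prop:geometric-generic} applies; the $\eps$-buffer is present precisely to keep $F(\Sigma_\infty)=F(\Sigma')$ below $\Lambda$ so that Lemma~\ref{lemm:reg-shrinkers} remains available in the limit.
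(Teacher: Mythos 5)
Your proof is correct and follows essentially the same compactness-and-contradiction scheme as the paper: upper semicontinuity of density forces the limit flow to be self-similar, the non-crossing hypothesis plus the Frankel property and the strong maximum principle identify the backward shrinker with $\Sigma_\infty$, and Proposition \ref{prop:geometric-generic} together with $(\dagger\dagger_{n,\Lambda})$ and the isolatedness of generic shrinkers \cite{ColdingIlmanenMinicozzi} yields the contradiction. The only cosmetic slip is the final ``$\Sigma_i=\Sigma_\infty$'', which should read ``$\Sigma_i\in\cS_n^{\textnormal{gen}}$'' (rigidity gives membership in the generic family, not literal equality), and this does not affect the argument.
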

\begin{proof}
We argue by contradiction. Consider a sequence of $\Sigma_i \in \cS_n^*(\Lambda-\eps) \setminus \cS_n^\textnormal{gen}$ and $\cM_i$ ancient cyclic unit-regular integral Brakke flows in $\RR^{n+1}$ with $\lambda(\tilde \cM_i)\leq F(\Sigma)$ so that $\tilde \cM_i$ does not smoothly cross the flow $(-\infty,0) \ni t\mapsto \cH^n\lfloor\sqrt{-t}\,\Sigma_i$ and so that there are points $(\bx_i,t_i) \in (\RR^{n+1}\times \RR)\setminus (\bOh,0)$ with
\begin{equation}\label{eq:nearly-top-ent-assump}
\Theta_{\tilde\cM_i}(\bx_i,t_i) \geq F(\Sigma_i) - o(1)
\end{equation}
as $i\to\infty$. We can assume that $|(\bx_i,t_i)| = 1$.

By Lemma \ref{lemm:reg-shrinkers} and Allard's theorem \cite{Allard,Simon:GMT}, we can pass to a subsequence so that $\Sigma_i$ converges in $C^\infty_\textrm{loc}$ to $\Sigma \in \cS_n(\Lambda)$. By Brakke's theorem \cite{Brakke,White:Brakke}, $\Sigma$ is non-flat. Because cylinders are isolated in $C^\infty_\textrm{loc}$ by \cite{ColdingIlmanenMinicozzi}, we thus see that $\Sigma \in  \cS_n^*(\Lambda) \setminus \cS_n^\textnormal{gen}$. Note that $F(\Sigma_{i})\to F(\Sigma)$. 

We now pass to a further subsequence so that $(\bx_i,t_i) \to (\bx_0,t_0) \in \RR^{n+1}\times \RR$ with $|(\bx_0,t_0)| = 1$ and the Brakke flows $\tilde \cM_i$ converge to an ancient cyclic unit-regular integral Brakke flow $\tilde \cM$ with $\lambda(\tilde \cM) \leq F(\Sigma)$. By upper semi-continuity of Gaussian density, \eqref{eq:nearly-top-ent-assump} implies that $\Theta_{\tilde \cM}(\bx_0,t_0) \geq F(\Sigma)$. Because $\lambda(\tilde \cM) \leq F(\Sigma)$, $\tilde \cM$ is a self-similar flow around $(\bx_0,t_0)$. By stability of smoothly crossing flows, Lemma \ref{lemm:stab-crossing}, $\tilde\cM$ does not smoothly cross $(-\infty,0) \ni t\mapsto \cH^n\lfloor\sqrt{-t}\,\Sigma$.

Consider any tangent flow to $\tilde \cM$ at $t=-\infty$. By Huisken's monotonicity formula and Lemma \ref{lemm:reg-shrinkers} there is a smooth shrinker $\tilde\Sigma$ so that this tangent flow at $t=-\infty$ corresponds to some $\tilde\Sigma\in \cS_{n}(\Lambda)$ with multiplicity-one. By the Frankel property for self-shrinkers (cf.\ \cite[Corollary C.4]{CCMS:generic1}) and the strong maximum principle, if $\tilde\Sigma\neq\Sigma$ then the flows $t\mapsto \cH^n\lfloor \sqrt{-t}\,\Sigma$ and $t\mapsto \cH^n\lfloor \sqrt{-t}\, \tilde \Sigma$ smoothly cross each other at some point. This contradicts the stability of smooth crossings.

We conclude that any tangent flow to $\tilde \cM$ at $t=-\infty$ is the flow associated to $\Sigma$. Since $\tilde\cM$ is self-similar around $(\bx_{0},t_{0})$ we find
\[
\tilde \cM(t) = \cH^n\lfloor(\sqrt{t_0 - t}\, \Sigma + \bx_0)
\]
for $t < t_{0}$. Since $\tilde\cM$ does not smoothly cross $t\mapsto \cH^n\lfloor \sqrt{-t}\,\Sigma$, we see that there is an open set $\Omega \subset \RR^{n+1}$ with $\partial\Omega = \Sigma$ so that 
\[
\sqrt{t_0 - t}\, \Sigma + \bx_0 \subset \sqrt{-t} \, \bar\Omega
\]
for $t < \min\{0,t_{0}\}$. We can thus apply Proposition \ref{prop:geometric-generic} to conclude that (up to a rotation) $\Sigma = \hat \Sigma \times \RR$  for $\hat \Sigma \in \cS_{n-1}^*(\Lambda)$. By hypothesis $(\dagger\dagger_{n,\Lambda})$, $\hat\Sigma \in \cS^\textrm{gen}_{n-1}$ so $\Sigma = \hat\Sigma \times \RR \in \cS^\textrm{gen}_n$. This is a contradiction. 
\end{proof}

\section{Proof of Theorem \ref{theo:main-n-dim}}\label{sec:proof-main}

For $M'\subset \RR^{n+1}$ a smooth closed hypersurface, recall that $\fF(M')$ is the set of cyclic unit-regular integral Brakke flows $\cM'$ with $\cM'(0) = \cH^n\lfloor M'$. Note that \cite{Ilmanen:levelset,White:cyclic} implies that $\mathfrak{F}(M') \neq \emptyset$ (see also \cite[Appendix B]{HershkovtisWhite}). 

We define
\[
\cD(M') : = \sup\{ \Theta_{\cM'}(\bx,t) : \cM'\in\mathfrak{F}(M'), (\bx,t) \in \sing\cM'\setminus\sing_{\textrm{gen}}\cM'\}.
\]
Recall that by convention $\sup \emptyset = -\infty$.

Assume that hypotheses $(\dagger_{n,\Lambda})$ and $(\dagger\dagger_{n,\Lambda})$ hold for $\Lambda \in (\lambda(\SS^{n}),\lambda(\SS^{n-2}]$ fixed. Consider a smooth closed hypersurface $M^{n}\subset \RR^{n+1}$ with $\lambda(M) \leq \Lambda$. Flowing $M$ by mean curvature flow for a short time strictly decreases the entropy unless $M$ is homothetic to a self-shrinker. If $M$ is homothetic to a self-shrinker other than $\SS^{n}(\sqrt{2n})$ then by \cite{ColdingMinicozzi:generic}, a small $C^{\infty}$-perturbation of $M$ has strictly smaller entropy. 

As such, either $M=\SS^{n}(r)$ in which case the Theorem \ref{theo:main-n-dim} trivially holds or we can perform an initial perturbation and assume that $\lambda(M) \leq \Lambda - 2\eps$ for some $\eps>0$. Choose a foliation $\{M_{s}\}_{s\in (-1,1)}$ of a tubular neighborhood of $M$ so that $M_{0}=M$ and so that $\lambda(M_{s}) \leq \Lambda -\eps$. Fix $\delta=\delta(n,\eps,\Lambda)>0$ from Proposition \ref{prop:density-drop-effective}. 

\begin{lemma}\label{lemm:dens-drop}
We have
\[
\limsup_{s\to s_{0}} \cD(M_{s}) \leq \cD(M_{s_{0}}) - \delta.
\]
for all $s_{0}\in (-1,1)$. 
\end{lemma}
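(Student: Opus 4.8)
The plan is to argue by contradiction, take two nested subsequential limits of the competing flows, and produce an \emph{ancient} flow that lies weakly on one side of a non-generic shrinker (this is where the disjointness of the leaves $M_s$ enters) while carrying a density that is too large; Proposition~\ref{prop:density-drop-effective} then yields the contradiction. Concretely, suppose the conclusion fails at some $s_0$: there are $\eta>0$, a sequence $s_i\to s_0$ — which, after passing to a subsequence, we may take strictly monotone, say $s_i\downarrow s_0$ — flows $\cM_i\in\fF(M_{s_i})$, and points $(\bx_i,t_i)\in\sing\cM_i\setminus\sing_{\textrm{gen}}\cM_i$ with $\Theta_{\cM_i}(\bx_i,t_i)\geq\cD(M_{s_0})-\delta+\eta$. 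Since the $M_{s_i}$ lie in a fixed tubular neighbourhood and converge smoothly to $M_{s_0}$, the $\cM_i$ are uniformly bounded in space-time and become extinct in uniformly bounded time, so the $t_i$ stay in a compact subset of $(0,\infty)$. Passing to a subsequence, $\cM_i\to\cM_\infty$ as Brakke flows with $\cM_\infty\in\fF(M_{s_0})$ — cyclic and unit-regular pass to Brakke limits (White), and smooth short-time convergence of the $M_{s_i}$ forces $\cM_\infty(0)=\cH^n\lfloor M_{s_0}$ — while $(\bx_i,t_i)\to(\bx_0,t_0)$ with $t_0>0$, and, exactly as in the proof of Proposition~\ref{prop:density-drop-effective} (Lemma~\ref{lemm:reg-shrinkers}, Allard, Brakke), the tangent shrinkers $\Sigma_i$ of $\cM_i$ at $(\bx_i,t_i)$ subconverge in $C^\infty_{\textrm{loc}}$ to some $\Sigma\in\cS_n^*(\Lambda-\eps)$ with $F(\Sigma)=\lim_i\Theta_{\cM_i}(\bx_i,t_i)=:L\ge\cD(M_{s_0})-\delta+\eta$; as round spheres and cylinders are isolated in $C^\infty_{\textrm{loc}}$ \cite{ColdingIlmanenMinicozzi}, $\Sigma\notin\cS_n^{\textrm{gen}}$. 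Upper semicontinuity of Gaussian density gives $(\bx_0,t_0)\in\sing\cM_\infty$ with $\Theta_{\cM_\infty}(\bx_0,t_0)\ge L$.

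I would then dichotomize on the tangent shrinker $\Sigma_0$ of $\cM_\infty$ at $(\bx_0,t_0)$. If $(\bx_0,t_0)\in\sing_{\textrm{gen}}\cM_\infty$, i.e.\ $\Sigma_0$ is a round sphere or cylinder, then the scale on which $\cM_i$ departs from $\Sigma_i$ near $(\bx_i,t_i)$ must collapse to $0$, and rescaling $\cM_i$ about $(\bx_i,t_i)$ at that scale produces an ancient flow asymptotic at $t=-\infty$ to a round (hence mean-convex) cylinder; its canonical-neighbourhood structure \cite{ChoiHaslhoferHershkovits,ChoiHaslhoferHershkovitsWhite} forces every singularity — in particular the one modelled on $\Sigma$ — to be spherical or cylindrical, contradicting $\Sigma\notin\cS_n^{\textrm{gen}}$. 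Hence $(\bx_0,t_0)\in\sing\cM_\infty\setminus\sing_{\textrm{gen}}\cM_\infty$, so that $\Sigma_0\in\cS_n^*(\Lambda-\eps)\setminus\cS_n^{\textrm{gen}}$ and, setting $D_0:=F(\Sigma_0)=\Theta_{\cM_\infty}(\bx_0,t_0)$, $D_0\le\cD(M_{s_0})$ by definition of $\cD$. Now I would rescale $\cM_i$ parabolically about $(\bx_0,t_0)$ by the reciprocal of that defect scale and pass to a further subsequential limit $\cN$: since $t_0>0$ this is an \emph{ancient}, cyclic, unit-regular, integral Brakke flow whose tangent flow at $t=-\infty$ is the $\Sigma_0$-shrinker flow — so $\lambda(\cN)=F(\Sigma_0)=D_0$ — and which retains a singular point $(\by_0,\tau_0)\ne(\bOh,0)$ with $\Theta_{\cN}(\by_0,\tau_0)\ge L$ inherited from the $(\bx_i,t_i)$. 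Here Proposition~\ref{prop:stab-sing-ent-drop} and the choice $\delta<\delta_0$ are what guarantee that this density and singular point survive the two limits.

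The essential point — and the main obstacle — is that $\cN$ \emph{does not smoothly cross} $(-\infty,0)\ni t\mapsto\cH^n\lfloor\sqrt{-t}\,\Sigma_0$. This is exactly where the foliation is used: since the leaves $M_s$ are pairwise disjoint and $s_i\downarrow s_0$, the avoidance principle places $\supp\cM_i$ weakly on one side of the level-set flow of $M_{s_0}$, hence of $\supp\cM_\infty$, and in particular of the $\Sigma_0$-shrinker flow after blowing up at $(\bx_0,t_0)$; this one-sidedness passes to $\cN$ by the stability of smoothly crossing flows (Lemma~\ref{lemm:stab-crossing}). Granting it, Proposition~\ref{prop:density-drop-effective} applied to $\cN$ and $\Sigma_0$ gives $\Theta_{\cN}(\by_0,\tau_0)\le F(\Sigma_0)-\delta=D_0-\delta\le\cD(M_{s_0})-\delta$, contradicting $\Theta_{\cN}(\by_0,\tau_0)\ge L>\cD(M_{s_0})-\delta$. (The strictness $F(\Sigma_0)<\Lambda-\eps$ needed to invoke Proposition~\ref{prop:density-drop-effective} is harmless: arrange $\lambda(M_s)\le\Lambda-\eps$ with slack, or apply that proposition with $\eps/2$.) I expect the two rescaling steps — pinning down the defect-scale blow-up, and above all converting the disjointness of the leaves into a genuine non-crossing statement for the ancient flow $\cN$ — to be the heart of the argument, the compactness and the final appeal to Proposition~\ref{prop:density-drop-effective} being routine.
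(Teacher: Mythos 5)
Your proposal reproduces the paper's argument in its essential structure: the contradiction setup, the two nested limits (first $\cM_i\rightharpoonup\cM_\infty\in\fF(M_{s_0})$ with $(\bx_i,t_i)\to(\bx_0,t_0)$, then the parabolic rescaling of the $\cM_i$ about $(\bx_0,t_0)$ at the scale $|(\bx_i,t_i)-(\bx_0,t_0)|\to 0$ so that the singular points survive at unit distance), the conversion of the disjointness of the leaves into non-crossing via the avoidance principle and Lemma~\ref{lemm:stab-crossing}, and the final appeal to Proposition~\ref{prop:density-drop-effective}. That part is right.

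There is, however, one genuine gap: your treatment of the case $(\bx_0,t_0)\in\sing_{\textrm{gen}}\cM_\infty$ via a canonical-neighbourhood/defect-scale blow-up does not work in the stated generality. Canonical neighbourhoods are available only for singularities modelled on $\SS^{n}$ and $\SS^{n-1}\times\RR$ \cite{ChoiHaslhoferHershkovits,ChoiHaslhoferHershkovitsWhite}, not for $\SS^{n-k}\times\RR^{k}$ with $k\ge 2$ --- and this already matters for $n=3$, where $\SS^{1}\times\RR^{2}$ is a generic model (the paper flags exactly this limitation in Appendix~\ref{app:stab-gen}). The correct tool here is Proposition~\ref{prop:stab-sing-ent-drop} applied directly to $(\bx_i,t_i)\in\sing\cM_i\setminus\sing_{\textrm{gen}}\cM_i$ converging to the generic point: together with $\delta<\delta_0$ it immediately contradicts $\lim_i\Theta_{\cM_i}(\bx_i,t_i)>\cD(M_{s_0})-\delta$. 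You invoke that proposition, but for the wrong purpose --- the survival of the density and of the singular point under the two limits needs only upper semicontinuity of $\Theta$ and $|(\tilde\bx_i,\tilde t_i)|=1$, not Proposition~\ref{prop:stab-sing-ent-drop}. A second, more minor, over-claim: you assert that the tangent flow of $\cN$ at $t=-\infty$ is the $\Sigma_0$-shrinker flow. That identification is not immediate at this stage (in the paper it is carried out \emph{inside} the proof of Proposition~\ref{prop:density-drop-effective}, using Frankel plus non-crossing) and is not needed; what the application of that proposition requires is only $\lambda(\cN)\le F(\Sigma_0)=\Theta_{\cM_\infty}(\bx_0,t_0)$, which follows from Huisken monotonicity and upper semicontinuity of the Gaussian density ratios along the rescaled sequence. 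With these two repairs your argument coincides with the paper's proof.
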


Lemma \ref{lemm:dens-drop} implies Theorem \ref{theo:main-n-dim} by a straightforward iteration argument since by Brakke's regularity theorem \cite{Brakke,White:Brakke}, if $\cD(M') \leq 1$ then $\cD(M') = -\infty$ implying that $\sing\cM' = \sing_{\textrm{gen}}\cM'$ for all $\cM'\in\mathfrak{F}(M')$. Since $\lambda(M') <  \Lambda \leq \lambda(\SS^{n-2}\times \RR^2)$, any $\cM'\in \mathfrak{F}(M')$ has only (multiplicity one) $\SS^n$ and $\SS^{n-1}\times\RR$-type singularities. Thus,  the resolution of the mean convex neighborhood conjecture for $\SS^{n-1} \times \RR$ singularities \cite{ChoiHaslhoferHershkovits,ChoiHaslhoferHershkovitsWhite} (cf.\ \cite{HershkovtisWhite}) implies non-fattening of the flow of $M'$.

\begin{proof}[Proof of Lemma \ref{lemm:dens-drop}]
Assume there is $s_{i}\to s_{0} \in (-1,1)$ with $s_i\neq s_0$ but
\[
\lim_{i\to\infty} \cD(M_{s_{i}}) > \cD(M_{s_{0}}) - \delta.
\]
Fix $\cM_{i} \in \mathfrak{F}(M_{s_{i}})$ and $(\bx_{i},t_{i})\in \sing\cM_{i}\setminus \sing_\textrm{gen}\cM_i$ with
\[
\lim_{i\to\infty}\Theta_{\cM_{i}}(\bx_{i},t_{i}) > \cD(M_{s_{0}}) - \delta
\]
Pass to a subsequence $\cM_{i}$ converging to $ \cM \in \mathfrak{F}(M_{s_{0}})$ and $(\bx_{i},t_{i})\to (\bx_0,t_0) \in \sing\cM$. Since $s_i\neq s_0$ for all $i$, we have that $M_{s_i}$ is disjoint from $M_{s_0}$ for all $i$. In particular, $\supp \cM_i \cap \supp\cM = \emptyset$ (by the avoidance principle for Brakke flows \cite[10.6]{Ilmanen:elliptic}). Thus, $(\bx_i,t_i) \neq (\bx_0,t_0)$. 

Observe that that if $(\bx_0,t_0)\in \sing_\textrm{gen}\cM$ then since $\lambda(M) < \Lambda \leq \lambda(\SS^{n-2})$, we see that $(\bx_0,t_0)$ must be a $\SS^n$ or $\SS^{n-1}\times \RR$-type singularity. Proposition \ref{prop:stable-sing} then implies that $(\bx_i,t_i) \in \sing_\textrm{gen}\cM_i$, a contradiction. Thus, it must hold that $(\bx_0,t_0) \in \sing\cM \setminus \sing_\textrm{gen}\cM$.

%, then Proposition \ref{prop:stab-sing-ent-drop} implies that 
%\[
%\lim_{i\to\infty} \cD(M_{s_{i}}) \leq \cD(M_{s_{0}}) - \delta_0
%\]
%and we chose $\delta < \delta_0$ in Proposition \ref{prop:density-drop-effective}. This is a contradiction, so  

Translate $(\bx_0,t_0)$ to the space-time origin and parabolically dilate to yield $\tilde \cM_i$ and $(\tilde \bx_i,\tilde t_i)$ with $|(\tilde \bx_i,\tilde t_i)|=1$ and
\[
\lim_{i\to\infty}\Theta_{\tilde \cM_{i}}(\tilde \bx_{i},\tilde t_{i}) > \cD(M_{s_{0}}) - \delta
\]
Pass to a subsequence so that $\tilde \cM_i\rightharpoonup \tilde\cM$ and $(\tilde \bx_i,\tilde t_i) \to (\tilde \bx,\tilde t) \in (\RR^{n+1}\times \RR)\setminus(\bOh,0)$. By upper semicontinuity of density
\begin{equation}\label{eq:blow-up-density-is-big}
\Theta_{\tilde \cM}(\tilde \bx ,\tilde t) > \cD(M_{s_{0}}) - \delta.
\end{equation}
On the other hand, we can perform the same translation and parabolic dilation to $\cM$ and by extracting a further subsequence, the resulting flows converge to a tangent flow to $\cM$ at $(\bx_0,t_0)$. By Lemma \ref{lemm:reg-shrinkers}, the tangent flow is the multiplicity-one flow associated to a smooth shrinker $\Sigma$. Note that 
\[
F(\Sigma) \leq \lambda(\cM) \leq \limsup_{s\to s_0} \lambda(M_s) \leq \Lambda -\eps.
\]
Since $(\bx_0,t_0) \in \sing\cM\setminus\sing_\textrm{gen}\cM$ it must hold that $\Sigma \in \cS^*_n(\Lambda-\eps)\setminus\cS^\textrm{gen}_n$. Huisken's monotonicity formula implies that $\lambda(\tilde\cM) \leq F(\Sigma) =\Theta_\cM(\bx_0,t_0)$ (cf.\ the proof of Proposition 10.6 in \cite{CCMS:generic1}). Finally, since the supports of $\cM$ and $\cM_i$ are disjoint, $\cM_i$ does not smoothly cross $\cM$. As such (using Lemma \ref{lemm:stab-crossing}), $\tilde\cM$ does not smoothly cross $t\mapsto \cH^n\lfloor \sqrt{-t}\Sigma$. We can now apply Proposition \ref{prop:density-drop-effective} to conclude that 
\[
\Theta_{\tilde\cM}(\tilde \bx,\tilde t) \leq F(\Sigma) - \delta = \Theta_\cM(\bx_0,t_0) - \delta \leq \cD(M_{s_0}) - \delta. 
\]
This contradicts \eqref{eq:blow-up-density-is-big}, completing the proof. 
\end{proof}

\appendix

\section{Stability of generic singularities}\label{app:stab-gen}

Based on \cite{ChoiHaslhoferHershkovitsWhite}, the following stability of generic singularities was proven in \cite[Proposition 2.3]{SchulzeSesum} (see \cite[Lemma 10.4]{CCMS:generic1} for the simple argument when the singularity is modeled on $\SS^n$). When $n=2$ this also follows via density considerations using \cite{BernsteinWang:TopologicalProperty}. 
\begin{proposition}\label{prop:stable-sing}
Suppose that $\cM_i\rightharpoonup \cM$ are unit-regular integral Brakke flows in $\RR^{n+1}$ and that $(\bx_i,t_i) \in \sing\cM_i$ converge to $(\bOh,0) \in \sing_\textnormal{gen}\cM$. If the singularity at $(\bOh,0)$ is modeled on $\SS^n$ or $\SS^{n-1}\times \RR$, then for $i$ sufficiently large $(\bx_i,t_i) \in \sing_\textnormal{gen}\cM_i$. 
\end{proposition}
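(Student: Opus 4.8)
The plan is to prove the stability of generic singularities by a contradiction argument combined with a canonical neighborhood / local regularity theorem near spherical and cylindrical singularities. First I would suppose the statement fails: after passing to a subsequence, $\cM_i \rightharpoonup \cM$, $(\bx_i, t_i) \in \sing\cM_i$ with $(\bx_i,t_i) \to (\bOh,0) \in \sing_\textnormal{gen}\cM$, and the tangent flow to $\cM$ at $(\bOh,0)$ is modeled on $\SS^n$ or $\SS^{n-1}\times\RR$, but for infinitely many $i$ the point $(\bx_i,t_i) \notin \sing_\textnormal{gen}\cM_i$, i.e. no tangent flow there is a multiplicity-one round sphere or cylinder.

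The key input is the canonical neighborhood theorem of Choi--Haslhofer--Hershkovits (for the $\SS^n$ case and the neck case) and Choi--Haslhofer--Hershkovits--White (for $\SS^{n-1}\times\RR$): if a mean curvature flow has a tangent flow at a point modeled on $\SS^n$ or $\SS^{n-1}\times \RR$, then in a space-time neighborhood of that point the flow is, after rescaling, graphical and $\eps$-close to the corresponding shrinking sphere or cylinder (with appropriate multiplicity-one and unit-regularity consequences). The plan is: by this theorem applied to $\cM$ at $(\bOh,0)$, there is a parabolic ball $P = P(\bOh,0,r)$ and a scale at which $\cM$ restricted to $P$ is a smooth, multiplicity-one, graphical perturbation of the model shrinking sphere/cylinder flow on the relevant time interval, with $\Theta_\cM \le \lambda(\SS^n)$ or $\le \lambda(\SS^{n-1}\times\RR)$ plus a controlled error, and in particular $\cM$ restricted to $P$ has no singular points other than (possibly) the origin, or is locally modeled on the smooth flow with a single well-understood singularity. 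Then I would invoke the stability of this canonical neighborhood structure under Brakke-flow convergence: since $\cM_i \rightharpoonup \cM$ and unit-regularity passes to limits, for $i$ large the flows $\cM_i$ also satisfy the canonical neighborhood hypotheses in $P$ — concretely, Brakke's local regularity theorem together with the graphical closeness forces $\cM_i$ in a slightly smaller parabolic ball to be smooth and graphically close to the same shrinking sphere/cylinder away from a controlled neck/cap region, and by Huisken monotonicity and upper semicontinuity of density, $\Theta_{\cM_i}(\bx_i,t_i)$ is within $o(1)$ of $\lambda(\SS^n)$ or $\lambda(\SS^{n-1}\times\RR)$. The stratification/dimension-reduction or a direct White-type local regularity argument then forces the unique singularity of $\cM_i$ inside $P$ to occur, and its tangent flow must be the multiplicity-one round sphere (resp.\ cylinder), since that is the only self-shrinker with entropy below the next threshold $\lambda(\SS^{n-1})$ (resp.\ with the relevant density). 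Hence $(\bx_i,t_i)\in\sing_\textnormal{gen}\cM_i$ for $i$ large, the desired contradiction.

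The main obstacle I expect is making precise the transfer of the canonical neighborhood structure from $\cM$ to the approximating flows $\cM_i$ — i.e.\ showing that the qualitative hypotheses of the Choi--Haslhofer--Hershkovits(-White) theorem (existence of a spherical or cylindrical tangent flow, and hence the mean-convex / $\eps$-neck structure) are \emph{stable} under weak Brakke convergence, rather than merely holding in the limit. The cleanest route is to avoid re-proving a canonical neighborhood theorem for the sequence and instead argue entirely via densities and local regularity: once $\cM$ is smooth and graphical on $P \setminus \{(\bOh,0)\}$ and close to the model, Brakke's $\eps$-regularity theorem gives smooth graphical convergence $\cM_i \to \cM$ on compact subsets of $P\setminus\{(\bOh,0)\}$, so $\sing\cM_i \cap P$ lies in a tiny neighborhood of $(\bOh,0)$; then since $(\bx_i,t_i)$ is a singular point of $\cM_i$ with density $\to \Theta_\cM(\bOh,0) \in \{\lambda(\SS^n),\lambda(\SS^{n-1}\times\RR)\}$, every tangent flow there is a self-shrinker of entropy at most this value, which — again below the threshold — must be the round sphere or cylinder with multiplicity one. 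This reduces everything to a density-gap statement, which is exactly the kind of argument used for the $\SS^n$ case in \cite[Lemma 10.4]{CCMS:generic1} and, for $n=2$, follows from the Bernstein--Wang classification \cite{BernsteinWang:TopologicalProperty}; for general $n$ one additionally cites \cite[Proposition 2.3]{SchulzeSesum}, so in the write-up I would simply present the density-gap reduction and defer the sharpest version to those references.
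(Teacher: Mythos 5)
First, a point of comparison: the paper does not actually prove this proposition --- it is quoted from \cite[Proposition 2.3]{SchulzeSesum}, which in turn rests on the canonical/mean-convex neighborhood theorems of \cite{ChoiHaslhoferHershkovits,ChoiHaslhoferHershkovitsWhite}; the ``simple'' density argument is attributed only to the $\SS^n$ case (\cite[Lemma 10.4]{CCMS:generic1}) and, for $n=2$, to the Bernstein--Wang classification \cite{BernsteinWang:TopologicalProperty}. Your first route (canonical neighborhood for $\cM$ at $(\bOh,0)$, then stability of that structure under Brakke convergence) is exactly the strategy of the cited reference, and you correctly identify the transfer of the neighborhood structure to the $\cM_i$ as the crux. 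If your final intention is simply to defer to \cite{SchulzeSesum}, that matches the paper.

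The problem is that the ``cleanest route'' you then commit to --- the pure density-gap reduction --- has a genuine gap in the cylindrical case for $n\geq 3$. Upper semicontinuity does give $\limsup_i\Theta_{\cM_i}(\bx_i,t_i)\leq \Theta_\cM(\bOh,0)=\lambda(\SS^{n-1})$, and since this is $<2$ any tangent flow at $(\bx_i,t_i)$ is multiplicity one; but there is \emph{no} classification of self-shrinkers in $\RR^{n+1}$ with entropy below $\lambda(\SS^{n-1})$ when $n\geq 3$. Indeed, without a hypothesis of the type $(\dagger_{n,\Lambda})$ a tangent flow at $(\bx_i,t_i)$ need not even be supported on a smooth shrinker, and even granting smoothness, exotic (e.g.\ asymptotically conical) shrinkers with entropy in the interval $(\lambda(\SS^n),\lambda(\SS^{n-1}))$ are not excluded by anything you invoke. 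So ``density below the threshold forces the round sphere or cylinder'' is unjustified except when $n=2$, where it is precisely \cite{BernsteinWang:TopologicalProperty}, or for the $\SS^n$ case, where the sharp entropy-minimization and rigidity of the round sphere do make the density argument work (this is why \cite[Lemma 10.4]{CCMS:generic1} treats only that case). For $\SS^{n-1}\times\RR$ in general dimension you cannot avoid the canonical neighborhood machinery: the content of \cite[Proposition 2.3]{SchulzeSesum} is that the graphical closeness of $\cM$ to the shrinking cylinder at a fixed scale passes to $\cM_i$ for large $i$ via Brakke's local regularity theorem, whence $\cM_i$ satisfies the hypotheses of the mean-convex neighborhood theorem of \cite{ChoiHaslhoferHershkovitsWhite} near $(\bx_i,t_i)$ and all of its singular points there have spherical or cylindrical tangent flows. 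That step --- which you flagged as the main obstacle and then sidestepped --- is exactly the part that must be carried out (or cited) and is not replaced by the density reduction.
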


%This stability property has not yet been proven for $\SS^{n-k}\times \RR^k$ singularities. However, the following weaker result will suffice for our purposes here. For $\cM_i = \cM$, this is \cite[Theorem 0.2]{ColdingIlmanenMinicozzi} and the proof extends almost verbatim to the current setting. 
%
%\begin{proposition}\label{prop:stab-sing-ent-drop}
%Suppose that $\cM_i\rightharpoonup \cM$ are unit-regular integral Brakke flows in $\RR^{n+1}$ and that $(\bx_i,t_i) \in \sing\cM_i \setminus \sing_\textrm{gen}\cM_i$ converge to $(\bOh,0) \in \sing_\textnormal{gen}\cM$. Then there is $ \delta_0=\delta_0(n)>0$ so that
%\[
%\limsup_{i\to\infty} \Theta_{\cM_i}(\bx_i,t_i) \leq \Theta_\cM(\bOh,0) - \delta_0. 
%\]
%\end{proposition}

\section{Stability of crossing points} \label{app:stab-cross}

\begin{definition}\label{defi:smooth-crossing}
Given two integral unit Brakke flows $\cM^{(1)}$ and $\cM^{(2)}$, we say that $\cM^{(1)}$ and $\cM^{(2)}$ \emph{smoothly cross} at $(\bx,t)$ if there is $r>0$ with 
\[
\cM^{(j)}(s) \lfloor B_r(\bx) = \cH^n\lfloor \Gamma^{(j)}(s) 
\]
for $s \in (t-r^2,t+r^2)$ where $\Gamma^{(j)}(s)$ are smooth connected mean curvature flows so that in any small neighborhood of $\bx$ there are points of $\Gamma^{(1)}(0)$ on both sides of $\Gamma^{(2)}(0)$. 
\end{definition}

The following is a straightforward consequence of Brakke's regularity theorem \cite{Brakke,White:Brakke}.

\begin{lemma}\label{lemm:stab-crossing}
For $j=1,2$, suppose that $\cM_i^{(j)}\rightharpoonup \cM^{(j)}$ are integral unit-regular $n$-dimensional Brakke flows in $\RR^{n+1}$. Assume that $\cM^{(1)}$ smoothly crosses $\cM^{(2)}$ at $(\bx,t)$. Then, for $i$ sufficiently large, there is $(\bx_i,t_i)\to (\bx,t)$ so that $\cM^{(1)}_i$ smoothly crosses $\cM^{(2)}_i$ at $(\bx_i,t_i)$. 
\end{lemma}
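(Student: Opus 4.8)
The plan is to upgrade the weak convergence $\cM_i^{(j)}\rightharpoonup\cM^{(j)}$ ($j=1,2$) to \emph{smooth} convergence of the flows in a fixed small space-time neighborhood of $(\bx,t)$ via Brakke's regularity theorem, and then to transfer the crossing by continuity of the signed-distance function. Concretely: since $\cM^{(1)}$ smoothly crosses $\cM^{(2)}$ at $(\bx,t)$, fix $r>0$ and smooth connected mean curvature flows $\Gamma^{(j)}(s)$, $s\in(t-r^2,t+r^2)$, with $\cM^{(j)}(s)\lfloor B_r(\bx)=\cH^n\lfloor\Gamma^{(j)}(s)$ and with $\Gamma^{(1)}(0)$ having points on both sides of $\Gamma^{(2)}(0)$ in every neighborhood of $\bx$; in particular $\bx\in\Gamma^{(1)}(0)\cap\Gamma^{(2)}(0)$, and after shrinking $r$ we may assume each $\Gamma^{(j)}(0)\cap B_r(\bx)$ is a connected graph over $T_\bx\Gamma^{(j)}(0)$. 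As $\Gamma^{(j)}$ is a smooth flow of uniformly bounded geometry on $B_r(\bx)\times(t-r^2,t+r^2)$, its Gaussian density ratios at a fixed small scale $\rho$, over all space-time points in $B_{r/2}(\bx)\times(t-\tfrac14 r^2,t+\tfrac14 r^2)$, are below $1+\tfrac12\bar\eps$, where $\bar\eps=\bar\eps(n)>0$ is the threshold in Brakke's regularity theorem \cite{Brakke,White:Brakke}. By upper semi-continuity of Gaussian density under Brakke flow convergence, together with a compactness argument over the compact closure of that cylinder, the $\cM_i^{(j)}$ satisfy the same bound at scale $\rho$ for $i$ large; Brakke's regularity theorem then yields $r'\in(0,\tfrac12 r)$ and smooth mean curvature flows $\Gamma_i^{(j)}(s)$ with $\cM_i^{(j)}(s)\lfloor B_{r'}(\bx)=\cH^n\lfloor\Gamma_i^{(j)}(s)$ on $(t-(r')^2,t+(r')^2)$ and with uniform curvature bounds. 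Interior estimates for the graphical mean curvature flow equation and Arzel\`a--Ascoli then give $\Gamma_i^{(j)}\to\Gamma^{(j)}$ in $C^\infty_{\mathrm{loc}}$ on this cylinder, and after a final shrinking each $\Gamma_i^{(j)}(0)$ is a connected graph near $\bx$.

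Next I would transfer the crossing. Let $d^{(2)}$ and $d_i^{(2)}$ be the signed-distance functions to $\Gamma^{(2)}(0)$ and $\Gamma_i^{(2)}(0)$, smooth near $\bx$ by embeddedness and with $d_i^{(2)}\to d^{(2)}$ in $C^\infty_{\mathrm{loc}}$. Writing $\Gamma^{(1)}(0)$ and $\Gamma_i^{(1)}(0)$ as graphs over a common ball $B\subset T_\bx\Gamma^{(1)}(0)$ gives smooth functions $f=d^{(2)}\big|_{\Gamma^{(1)}(0)}$ and $f_i$ on $B$ with $f_i\to f$ in $C^\infty(B)$; the crossing hypothesis says $f(0)=0$ and $f$ takes both signs in every neighborhood of $0$, and we must produce $\by_i\to 0$ so that $f_i$ takes both signs in every neighborhood of $\by_i$. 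For each $k$ pick $\bm{p}_k^{\pm}\in B_{1/k}(0)$ with $f(\bm{p}_k^{+})>0>f(\bm{p}_k^{-})$; since $f_i(\bm{p}_k^{\pm})\to f(\bm{p}_k^{\pm})$, a diagonal choice $i\mapsto k(i)\to\infty$ produces $\bm{q}_i^{\pm}\to 0$ with $f_i(\bm{q}_i^{+})>0>f_i(\bm{q}_i^{-})$, so that $\Gamma_i^{(1)}(0)$ has points on both sides of $\Gamma_i^{(2)}(0)$ arbitrarily close to $\bx$.

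The one point requiring care — and which I expect to be the main obstacle — is to \emph{localize} this to a single crossing point, i.e.\ to pass from ``$f_i$ changes sign in a ball shrinking to $0$'' to ``$f_i$ changes sign in every neighborhood of some $\by_i\to 0$''. This is immediate when $\nabla f(0)\neq 0$ (a nondegenerate sign-changing zero persists for $f_i$ at a nearby point), and more generally whenever the lowest-order Taylor term of $f$ at $0$ is a sign-changing homogeneous polynomial, since such nodal behavior is stable under $C^\infty$ convergence. To reduce to this case one uses that $f$ is the signed distance between two \emph{distinct} smooth mean curvature flows: unique continuation forbids infinite-order contact, so $f$ vanishes to finite order at $0$, a blow-up produces a nonzero homogeneous polynomial which — by the both-sides hypothesis — changes sign, and the argument above then applies. (If Definition \ref{defi:smooth-crossing} is read as already requiring a genuine, finite-order, sign-changing crossing, then only the first two steps are needed.) Setting $(\bx_i,t_i):=(\by_i,t)$ completes the proof; the passage to smooth convergence and the continuity argument are routine, so the localization step is where I would concentrate the effort.
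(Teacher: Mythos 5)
The paper offers no argument for this lemma beyond the remark that it is ``a straightforward consequence of Brakke's regularity theorem,'' and the first half of your proposal is exactly that intended content: upgrade $\cM_i^{(j)}\rightharpoonup\cM^{(j)}$ to smooth graphical convergence on a fixed parabolic cylinder about $(\bx,t)$ via the density-ratio hypothesis of \cite{Brakke,White:Brakke} plus interior estimates. That part is fine. You have also correctly isolated the only nontrivial point, namely passing from ``$f_i$ changes sign in a ball shrinking to $0$'' to ``$f_i$ changes sign in every neighborhood of some $\by_i\to 0$.'' However, your proposed resolution of that point contains a genuine error: it is not true that if a smooth $f$ vanishes to finite order at $0$ and takes both signs in every neighborhood of $0$, then its lowest-order Taylor polynomial changes sign. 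For example $f(x,y)=x^2-y^3$ takes both signs arbitrarily close to the origin, yet its leading homogeneous term $x^2$ is nonnegative. Since $f$ here is a fixed-time slice of the difference of two graphical solutions (essentially free ``initial data'' for the linearized parabolic equation), nothing forces its leading polynomial to be harmonic-like, so this reduction cannot be salvaged as stated; moreover the asserted ``stability of nodal behavior under $C^\infty$ convergence'' for a sign-changing leading polynomial is itself exactly the localization problem in disguise and is not proved.

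The correct completion is more elementary and avoids blow-ups entirely. Work at time $t$ in a small ball $B$ where both families are connected graphs and $f_i\to f$ in $C^\infty$. For a continuous function $v$ on a connected open set $V$ whose zero set has empty interior and which takes both signs, one has $V=(\overline{\{v>0\}}\cap V)\cup(\overline{\{v<0\}}\cap V)$, since a point outside both closures would have a neighborhood on which $v\equiv 0$; by connectedness these two relatively closed sets must intersect, and any point $\by$ of the intersection has points of both signs in every neighborhood, i.e.\ is a crossing point in the sense of Definition \ref{defi:smooth-crossing}. It remains to check that, for $i$ large, $f_i$ cannot vanish on a nonempty open subset of $B$: smooth solutions of mean curvature flow are real-analytic in space at each interior time, so $f_i$ vanishing on an open set would force $\Gamma_i^{(1)}(t)=\Gamma_i^{(2)}(t)$ throughout $B$, and passing to the limit this contradicts the hypothesis that $\Gamma^{(1)}(t)$ has points strictly on both sides of $\Gamma^{(2)}(t)$. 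Applying this in balls $B_{\rho}(\bx)$ with $\rho\to 0$ (using your diagonal argument to guarantee $f_i$ takes both signs there) produces the required $(\bx_i,t_i)=(\by_i,t)\to(\bx,t)$. I recommend replacing your finite-order/leading-polynomial discussion with this connectedness argument.
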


\section{Local results}\label{app:local}

In this appendix we prove the following local perturbative result. 
\begin{proposition}\label{prop:local-result}
Suppose that $M^n\subset \RR^{n+1}$ is a closed embedded hypersurface, $\cM \in \cF(M)$ is a cyclic unit-regular integral Brakke flow starting at $M$. Assume that for $(\bx_0,t_0) \in \sing \cM$, the following holds:
\begin{itemize}
\item $\reg\cM \cap\{t<t_0\} \subset \RR^{n+1}\times \RR$ is connected
\item any tangent flow $\cN$ to $\cM$ at $(\bx_0,t_0)$ has $\cN(-1) = \cH^n\lfloor \Sigma$, for $\Sigma \in \cS_n^*\setminus \cS^\textnormal{gen}_{n}$ that does not split a line. 
\end{itemize}
Then, there is $r = r(\cM,\bx_0,t_0)>0$ so that for $M_j = \textnormal{graph}_M(u_j)$, $u_j>0$ with $u_j\to 0$ in $C^\infty$ it holds that any
\[
(\bx,t) \in B_r(\bx_0)\times (t_0-r^2,t_0+r^2)
\]
has $\Theta_{\cM_j}(\bx,t) \leq \Theta_\cM(\bx_0,t_0) - r$ for $j$ sufficiently large.
\end{proposition}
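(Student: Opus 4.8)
The plan is to adapt the blow-up-and-contradiction argument in the proofs of Lemma~\ref{lemm:dens-drop} and Proposition~\ref{prop:density-drop-effective} to the present local setting, using the two bulleted hypotheses on $\cM$ in place of the global hypotheses $(\dagger_{n,\Lambda})$ and $(\dagger\dagger_{n,\Lambda})$ (in particular closing the argument with Proposition~\ref{prop:geometric-generic} directly, since we cannot invoke Proposition~\ref{prop:density-drop-effective}). Write $\Sigma$ for the shrinker in the hypothesis; as every tangent flow of $\cM$ at $(\bx_0,t_0)$ is the multiplicity-one flow $t\mapsto\cH^n\lfloor\sqrt{-t}\,\Sigma$, we have $\Theta_\cM(\bx_0,t_0)=F(\Sigma)>1$. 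If the conclusion fails, then applying its negation with $r=\tfrac1m$ and diagonalizing produces graphs $M_i=\textnormal{graph}_M(u_i)$ with $u_i>0$, $u_i\to 0$ in $C^\infty$, flows $\cM_i\in\cF(M_i)$, and points $(\bx_i,t_i)\to(\bx_0,t_0)$ with $\liminf_i\Theta_{\cM_i}(\bx_i,t_i)\geq F(\Sigma)$. Since $M_i$ and $M$ are disjoint and compact, the avoidance principle \cite[10.6]{Ilmanen:elliptic} gives $\supp\cM_i(t)\cap\supp\cM(t)=\emptyset$ for $t\geq 0$; in particular $(\bx_0,t_0)\notin\supp\cM_i$ (so $(\bx_i,t_i)\neq(\bx_0,t_0)$ for $i$ large) and $\cM_i$ never smoothly crosses $\cM$.

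First I would blow up at $(\bx_0,t_0)$ (note $t_0>0$ since $\cM(0)=\cH^n\lfloor M$ is smooth): parabolically rescale by $\lambda_i\to\infty$ so that the images $(\tilde\bx_i,\tilde t_i)$ of $(\bx_i,t_i)$ have unit parabolic norm, and pass to subsequences so that $(\tilde\bx_i,\tilde t_i)\to(\tilde\bx,\tilde t)$ with $|(\tilde\bx,\tilde t)|=1$, the rescaled $\cM_i$ converge (using $\sup_i\lambda(\cM_i)\leq\sup_i\lambda(M_i)<\infty$) to an ancient cyclic unit-regular integral Brakke flow $\tilde\cM$, and the rescaled $\cM$ converge to the tangent flow of $\cM$ at $(\bx_0,t_0)$, namely $t\mapsto\cH^n\lfloor\sqrt{-t}\,\Sigma$. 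Upper semicontinuity of Gaussian density gives $\Theta_{\tilde\cM}(\tilde\bx,\tilde t)\geq F(\Sigma)$. Conversely, upper semicontinuity of $\Theta_\cM$ with $\Theta_\cM(\bx_0,t_0)=F(\Sigma)$ caps the densities of the $\cM_i$ in a fixed spacetime neighborhood of $(\bx_0,t_0)$ by $F(\Sigma)+o(1)$, and combining this with Huisken's monotonicity formula (cf.\ the proof of Proposition~10.6 in \cite{CCMS:generic1}) bounds every Gaussian integral of $\tilde\cM$ centered at $(\tilde\bx,\tilde t)$ by $F(\Sigma)$; hence that monotone quantity is identically $F(\Sigma)$ and $\tilde\cM$ is self-shrinking about $(\tilde\bx,\tilde t)$. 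Since the rescaled $\cM_i$ stay disjoint from the rescaled $\cM$, Lemma~\ref{lemm:stab-crossing} shows $\tilde\cM$ does not smoothly cross $t\mapsto\cH^n\lfloor\sqrt{-t}\,\Sigma$, and using connectedness of $\reg\cM\cap\{t<t_0\}$ together with $u_i>0$ to fix the side, $\tilde\cM$ lies weakly on one side of it.

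Next I would identify $\tilde\cM$ with a translated shrinking copy of the $\Sigma$-flow. Writing $\tilde\cM(t)=\cH^n\lfloor(\sqrt{\tilde t-t}\,\tilde V+\tilde\bx)$ for a shrinker $\tilde V$ with $F(\tilde V)=F(\Sigma)$, its tangent flow at $-\infty$ is $t\mapsto\cH^n\lfloor\sqrt{-t}\,\tilde V$; a diagonal blow-up argument, again combined with the density cap, shows this flow is self-shrinking about the origin with density $F(\Sigma)$ there and lies weakly on one side of $t\mapsto\cH^n\lfloor\sqrt{-t}\,\Sigma$, so by the Frankel property for self-shrinkers (cf.\ \cite[Corollary C.4]{CCMS:generic1}) together with the strong maximum principle one gets $\tilde V=\cH^n\lfloor\Sigma$. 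Thus $\tilde\cM(t)=\cH^n\lfloor(\sqrt{\tilde t-t}\,\Sigma+\tilde\bx)$, and the one-sidedness furnishes an open $\Omega\subset\RR^{n+1}$ with $\partial\Omega=\Sigma$ and $\sqrt{\tilde t-t}\,\Sigma+\tilde\bx\subset\sqrt{-t}\,\bar\Omega$ for $t<\min\{0,\tilde t\}$. Since $(\tilde\bx,\tilde t)\neq(\bOh,0)$, Proposition~\ref{prop:geometric-generic} forces either $\Sigma=\SS^n(\sqrt{2n})\in\cS_n^{\textnormal{gen}}$ or $\Sigma=O(\hat\Sigma\times\RR)$ with $\hat\Sigma\in\cS_{n-1}^*$, i.e.\ $\Sigma$ splits a line; both contradict the hypothesis that $\Sigma\in\cS_n^*\setminus\cS_n^{\textnormal{gen}}$ does not split a line, which is the desired contradiction.

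The hard part will be the blow-up step: transferring the density cap $F(\Sigma)$ from $\cM$ to $\tilde\cM$ and then recognizing $\tilde\cM$ as a translated shrinking copy of the $\Sigma$-flow, all without a global entropy bound and without Lemma~\ref{lemm:reg-shrinkers} (which requires $(\dagger_{n,\Lambda})$). This is exactly where the two bulleted hypotheses enter: connectedness of $\reg\cM\cap\{t<t_0\}$ prevents the one-sided perturbation $\cM_i$ from being squeezed between distinct sheets of $\cM$ near $(\bx_0,t_0)$ and hence fixes the side of $\tilde\cM$, while the classification of tangent flows of $\cM$ at $(\bx_0,t_0)$ pins down the shrinker $\Sigma$ in the blow-up limit. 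Once $\tilde\cM$ is recognized as a translated shrinking copy of $\Sigma$ lying on one side of the $\Sigma$-flow, Proposition~\ref{prop:geometric-generic} closes the argument exactly as in Proposition~\ref{prop:density-drop-effective}.
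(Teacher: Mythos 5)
Your proposal is correct and follows essentially the same route as the paper's proof: argue by contradiction, use the connectedness hypothesis to ensure $\cM_j\lfloor\{t<t_0\}\rightharpoonup\cM\lfloor\{t<t_0\}$, blow up at $(\bx_0,t_0)$ to extract an ancient one-sided flow $\tilde\cM$ with $\lambda(\tilde\cM)\le F(\Sigma)$ and a unit-distance point of density $\ge F(\Sigma)$, and then run the second half of the proof of Proposition~\ref{prop:density-drop-effective} to reach Proposition~\ref{prop:geometric-generic}, whose two alternatives are both excluded by hypothesis. The paper's proof is exactly this, stated more tersely (it cites the second half of Proposition~\ref{prop:density-drop-effective} as carrying over ``without change''), so your version is a faithful expansion of the same argument.
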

In particular, no tangent flow to $\cM$ at $(\bx_0,t_0)$ can arise as the tangent flow to $\cM_j$ at some point in $B_r(\bx_0)\times (t_0-r^2,t_0+r^2)$, for $j$ large. 

\begin{proof}
If this failed, there is $(\bx_j,t_j) \to (\bx_0,t_0)$ with
\[
\Theta_{\cM_j}(\bx,t) \geq \Theta_\cM(\bx_0,t_0) - o(1). 
\]
The assumption on the connectedness of the regular set implies that $\cM_j\lfloor \{t<t_0\}\rightharpoonup \cM\lfloor \{t<t_0\}$. Thus, by rescaling around $(\bx_0,t_0)$ so that $(\bx_j,t_j)$ is scaled to a unit distance from $(\bOh,0)$, we obtain $\Sigma \in \cS^*_n\setminus\cS^\textrm{gen}_n$ that does not split a line and an ancient Brakke flow $\tilde \cM$ that does not smoothly cross $t\mapsto\cH^n\lfloor\sqrt{-t}\, \Sigma$, so that $\lambda(\tilde\cM) \leq F(\Sigma)$, but for some $(\tilde \bx,\tilde t) \in (\RR^{n+1}\times \RR)\setminus\{(\bOh,0)\}$ it holds that $\Theta_{\tilde\cM}(\bx,t) \geq F(\Sigma)$. 

The argument in the second half of the proof of Proposition \ref{prop:density-drop-effective} carries over without change to show that there is an open set $\Omega\subset \RR^{n+1}$ with $\partial\Omega = \Sigma$ and 
\[
\sqrt{\tilde t - t}\, \Sigma + \tilde\bx \subset \sqrt{-t}\, \overline \Omega
\]
for $t<\min\{0,t_0\}$. By Proposition \ref{prop:geometric-generic}, we have that either $\Sigma = \SS^n(\sqrt{2n}) \in \cS^\textrm{gen}_n$ or $\Sigma$ splits a line.  Either case contradicts the assumption that $\cM$ has no such tangent flow at $(\bx_0,t_0)$. This completes the proof.
\end{proof}

Note that Proposition \ref{prop:local-result} does not give any indication as to \emph{how} the perturbation avoids the singularity (the trade-off is that the proof is very short). On the other hand, the results in \cite{CCMS:generic1} give a rather \emph{complete} description of how the perturbed flow avoids a compact/asymptotically conical singularity. The works \cite{SunXue:closed,SunXue:AC} also obtain some information along these lines, but only as long as the perturbed flow remains graphical over the original flow.

\section{The setting of area-minimizing hypersurfaces}\label{app:HS}

We recall the following fundamental result:
\begin{theorem}[{Hardt--Simon \cite[Theorem 2.1]{HardtSimon:foliation}}]\label{theo:HS-foliation}
If $\cC^n \subset \RR^{n+1}$ is a regular area minimizing cone then there exists smooth area-minimizing hypersurfaces $S_{\pm}$ in each component of $\RR^{n+1}\setminus\cC = U_+ \cup U_-$ so that if $S'$ is area minimizing and contained in $U_\pm$ then $S' = \lambda S_{\pm}$. 
\end{theorem}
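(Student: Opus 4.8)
\textbf{Proof proposal for Theorem \ref{theo:HS-foliation}.}

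The plan is to recast the Hardt--Simon foliation theorem as a density-drop statement completely parallel to the mean curvature flow arguments above, thereby reproving it without classifying the foliation. The elliptic analogue of Proposition \ref{prop:geometric-generic} is the following: if $\cC$ is a regular area-minimizing cone and $S \subset \bar U_+$ is an area-minimizing hypersurface with $\partial S = \emptyset$ lying (weakly) on one side of $\cC$ and asymptotic to $\cC$ with multiplicity one, then dilation invariance of $\cC$ forces $\lambda S \subset \bar U_+$ for all $\lambda > 0$; the family $\lambda \mapsto \lambda S$ has nonnegative normal speed $\bx \cdot \nu_S \geq 0$ at $\lambda = 1$, and $\bx \cdot \nu_S$ satisfies the Jacobi equation $L_S (\bx \cdot \nu_S) = 0$. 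The strong maximum principle then yields either $\bx \cdot \nu_S > 0$ everywhere — so $S$ is a cone (being dilation-invariant to first order and minimizing), hence a hyperplane or $\cC$ itself — or $\bx \cdot \nu_S \equiv 0$, again forcing $S$ to be a (minimizing, hence regular) cone, i.e.\ $S = \cC$. In particular no area-minimizing hypersurface other than a dilate of the boundary cone can be trapped on one side of $\cC$ and asymptotic to it.

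First I would establish \emph{existence} of $S_\pm$ by the standard perturb-and-minimize scheme: take an exhaustion by balls $B_R$, minimize area in $\bar U_+ \cap B_R$ among hypersurfaces enclosing a fixed small region near the origin on the $U_+$-side (equivalently, solve the obstacle-type Plateau problem with the cone as a one-sided barrier), and pass $R \to \infty$. By the maximum principle the minimizers stay strictly inside $U_+$ away from the origin, and by Hardt--Simon / Simon's asymptotics they are smooth (dimension $\leq 7$ for the cross-section is irrelevant once we know $\cC$ is regular, since the minimizer is smooth away from $\cC$ by the maximum principle and near $\cC$ by a foliation/barrier argument) and asymptotic to $\cC$ with multiplicity one. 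The same construction on the $U_-$ side gives $S_-$.

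The heart of the matter is \emph{uniqueness}: if $S'$ is area-minimizing and contained in $U_+$, I must show $S' = \lambda S_+$ for some $\lambda > 0$. Here I would run the density-drop argument of Section \ref{sec:proof-main} in its elliptic incarnation. Suppose $S'$ is not a dilate of $S_+$. Consider the family of dilates $\{\lambda S_+\}_{\lambda > 0}$, which foliate $U_+$; pick the smallest $\lambda_0$ with $\lambda_0 S_+$ touching $\bar{S'}$. By the strong maximum principle (Ilmanen's maximum principle for minimal varifolds, or Solomon--White), touching at an interior point forces $S' = \lambda_0 S_+$, a contradiction; so the only possible contact is ``at infinity,'' i.e.\ $S'$ and $\lambda_0 S_+$ are asymptotic to the same cone $\cC$. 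Blowing down $S'$ at infinity gives a minimizing cone $\cC'$ with $\cC' \subset \bar U_+$ and $\lambda(\cC') \leq \lambda(\cC)$ (by monotonicity of the density ratio, the analogue of Huisken's monotonicity here), and one shows $\cC' = \cC$ exactly as in the second half of the proof of Proposition \ref{prop:density-drop-effective}: any blow-down must be a multiplicity-one minimizing cone on one side of $\cC$, and the elliptic version of Proposition \ref{prop:geometric-generic} sketched above forces it to equal $\cC$. Then $S'$ is asymptotic to $\cC$, lies on one side, and is minimizing — so $S'$ itself is, by the one-sided Jacobi-field argument, a dilate of the unique such hypersurface $S_+$.

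The main obstacle I anticipate is making the ``contact at infinity'' step rigorous: one needs sharp enough asymptotics (Hardt--Simon's sheeting/decay estimates, or Simon's cylindrical tangent cone uniqueness) to know that an area-minimizing hypersurface trapped between $\cC$ and a dilate of $S_+$ is genuinely asymptotic to $\cC$ with multiplicity one rather than, say, oscillating or picking up higher multiplicity, and to run the blow-down compactness with the requisite control. Once that decay input is in hand, the maximum-principle and density-drop steps are soft and mirror the parabolic arguments above almost verbatim.
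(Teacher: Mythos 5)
This statement is not proven in the paper at all: it is quoted from Hardt--Simon \cite{HardtSimon:foliation}, and the whole point of Appendix \ref{app:HS} is to show how the density-drop method lets one \emph{bypass} Theorem \ref{theo:HS-foliation} — the paper proves only the weaker one-sided density inequality ``in lieu of'' the foliation theorem. What you have written is therefore an attempted reproof of the original Hardt--Simon result, and its skeleton (one-sided minimization for existence, sliding the family of dilates for uniqueness) is essentially their own strategy dressed in the paper's language; the soft density-drop argument cannot substitute for it, because the foliation and uniqueness statements are exactly the information that argument is designed not to deliver.

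As a proof of Hardt--Simon, the sketch has concrete gaps. First, your ``elliptic analogue of Proposition \ref{prop:geometric-generic}'' linearizes the dilation family at $\lambda=1$: knowing $\lambda S\subset \bar U_+$ for all $\lambda>0$ only says the dilates stay in the cone $\bar U_+$; it does not place $\lambda S$ on one side of $S$ itself, so $\bx\cdot\nu_S\geq 0$ does not follow. (The paper's Appendix \ref{app:HS} proposition linearizes at $\lambda=0$, i.e.\ at $\cC=\partial U_\pm$, where one-sidedness is automatic — and that argument yields only the density bound, not the foliation.) Your dichotomy is also inverted: $\bx\cdot\nu_S\equiv 0$ is the cone case, while $\bx\cdot\nu_S>0$ makes $S$ a radial graph — which is precisely what you need for $S_+$, since otherwise the dilates $\{\lambda S_+\}$ need not foliate $U_+$ and the ``smallest touching $\lambda_0$'' in your uniqueness step is undefined. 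Establishing $\bx\cdot\nu_{S_+}>0$ requires the cut-and-paste lemma that minimizers of nested one-sided problems do not cross, which you have not supplied. Second, ``the minimizer is smooth away from $\cC$ by the maximum principle'' is false as stated: interior regularity for minimizing boundaries only bounds the singular set by codimension seven, and excluding singular points of $S_\pm$ is one of the substantive steps in \cite{HardtSimon:foliation} (it uses the bounded positive Jacobi field $\bx\cdot\nu$ against the growth rates of Jacobi fields on a tangent cone at a putative singular point). Third, your closing sentence — ``so $S'$ itself is, by the one-sided Jacobi-field argument, a dilate of $S_+$'' — assumes the uniqueness being proven. If your aim is only the generic-regularity corollaries, the paper's Appendix \ref{app:HS} shows the right economy: prove only $\Theta_{S'}(\bx)\leq \Theta_\cC(\bOh)-\delta$ for one-sided minimizers, which needs none of the foliation structure.
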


The uniqueness statement in Theorem \ref{theo:HS-foliation} implies smoothness of solution to the Plateau problem for seven-dimensional currents in $\RR^8$ with generic boundary data (see \cite[Theorem 5.6]{HardtSimon:foliation}). Later, Smale used Theorem \ref{theo:HS-foliation} to prove that for $(M^8,g)$ a closed Riemannian manifold and $\alpha \in H_7(M;\ZZ)$, there is a $C^k$-close metric $g'$ so that the least area representative of $\alpha$ is smooth \cite{Smale}. 

\begin{remark} Besides their role in generic regularity of area-minimizing hypersurfaces in eight-dimensional manifolds, the surfaces $S_\pm$ are important objects in their own right, cf.\ \cite{IlmanenWhite:sharp.entropy,CLS,ZWang:deform,Li-ZWang,Simon:liouville,Simon:sing-arb}. In our previous paper \cite{CCMS:generic1}, we proved the parabolic analogue of Theorem \ref{theo:HS-foliation} (for compact/asymptotically-conical self-shrinkers) by constructing and classifying ancient one-sided flows analogous to the surfaces $S_{\pm}$. 
\end{remark}

We explain here how the main idea of this note can be used to prove the generic regularity results from \cite{HardtSimon:foliation,Smale} using the following result in lieu of Theorem \ref{theo:HS-foliation} (compare with Proposition \ref{prop:density-drop-effective}): 
\begin{proposition}
There is $\delta >0$ with the following property. Suppose that $\cC^7\subset \RR^8$ is a non-flat area-minimizing cone. If $S'$ is area-minimizing with support contained in $\bar{U}_{\pm}$, where $\RR^8 \setminus \cC= U_{+}\cup U_{-}$, then 
\[
\Theta_{S'}(\bx) \leq \Theta_\cC(0) - \delta.
\]
\end{proposition}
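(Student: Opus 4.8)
The plan is to follow the strategy of the proof of Proposition~\ref{prop:density-drop-effective}, with the cone $\cC$ playing the role there played by the self-shrinking flow $t\mapsto\cH^n\lfloor\sqrt{-t}\,\Sigma$; since $\mu\cC=\cC$ for $\mu>0$, the cone is simultaneously the analogue of $\Sigma$ and of all of its rescalings. I argue by contradiction: suppose there are non-flat area-minimizing cones $\cC_i^7\subset\RR^8$, area-minimizing hypersurfaces $S_i'$ with $\supp S_i'\subset\bar U_{i,+}$ (say), where $\RR^8\setminus\cC_i=U_{i,+}\cup U_{i,-}$, and points $\bx_i\neq\bOh$ with $\Theta_{S_i'}(\bx_i)\geq\Theta_{\cC_i}(0)-o(1)$. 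Dilating about the origin (which fixes $\cC_i$ and $\bar U_{i,\pm}$ and preserves area-minimality), I may assume $|\bx_i|=1$. After passing to a subsequence, $\cC_i\to\cC$ and $S_i'\to S'$ as integral currents, converging smoothly away from the vertex, $\bx_i\to\bx_0$ with $|\bx_0|=1$, and $\supp S'\subset\bar U_+$ with $\cC=\partial U_+$. By Allard's regularity theorem \cite{Allard,Simon:GMT} (a gap theorem bounds $\Theta_{\cC_i}(0)$ away from $1$) the limit $\cC$ is again a non-flat area-minimizing cone, and upper semicontinuity of density gives $\Theta_{S'}(\bx_0)\geq\Theta_\cC(0)$.

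The first step is to show $S'=\cC+\bx_0$. Let $\cC''$ be the tangent cone to $S'$ at $\bx_0$, translated so its vertex is at the origin; it is an area-minimizing cone with $\Theta_{\cC''}(\bOh)\geq\Theta_{S'}(\bx_0)$ by the monotonicity formula, and since $\bar U_+$ is a cone with vertex $\bOh$ the inclusion $\supp S'\subset\bar U_+$ passes to the tangent cone, so $\supp\cC''\subset\bar U_+$. Now $\cC''\cap\SS^7$ and $\cC\cap\SS^7$ are smooth, connected, minimal hypersurfaces of $\SS^7$, the former lying in the closed component of $\SS^7\setminus(\cC\cap\SS^7)$ bounded by the latter; by the Frankel property in $\SS^7$ and the strong maximum principle they coincide, so $\cC''=\cC$. (This is the elliptic counterpart of the step in Proposition~\ref{prop:density-drop-effective} where the tangent flow at $-\infty$ is identified with $\Sigma$ via the Frankel property for shrinkers and the strong maximum principle.) Hence $\Theta_\cC(0)=\Theta_{\cC''}(\bOh)\geq\Theta_{S'}(\bx_0)\geq\Theta_\cC(0)$, with equality throughout; since $r\mapsto\Theta_{S'}(\bx_0,r)$ is monotone with $\lim_{r\to0^+}=\Theta_{S'}(\bx_0)$ and $\lim_{r\to\infty}=\Theta_{\cC''}(\bOh)$, it is constant, so $S'$ is a cone with vertex $\bx_0$; translating to the origin, $S'-\bx_0$ is its own blow-down, equal to $\cC''=\cC$, whence $S'=\cC+\bx_0$.

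The second step is an elliptic version of Proposition~\ref{prop:geometric-generic}: from $\cC+\bx_0\subset\bar U_+$ with $\bx_0\neq\bOh$ I derive that $\cC$ is flat, contradicting the above. Dilating $\cC+\bx_0\subset\bar U_+$ about the origin gives $\cC+\lambda\bx_0\subset\bar U_+$ for all $\lambda\geq0$, so $\lambda\mapsto\cC+\lambda\bx_0$ is a one-parameter family of minimal hypersurfaces contained in $\bar U_+$ and equal to $\cC=\partial U_+$ at $\lambda=0$; its normal speed there is $\bx_0\cdot\nu_\cC\geq0$ on $\cC\setminus\{\bOh\}$, with $\nu_\cC$ pointing into $U_+$, and this is a Jacobi field: $\Delta_\cC(\bx_0\cdot\nu_\cC)+|A_\cC|^2(\bx_0\cdot\nu_\cC)=0$. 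Since $\cC\setminus\{\bOh\}$ is connected (its link is connected by the Frankel property), the strong maximum principle gives either $\bx_0\cdot\nu_\cC\equiv0$ or $\bx_0\cdot\nu_\cC>0$ everywhere. In the first case $\bx_0$ is everywhere tangent to $\cC$, so $\cC$ splits off the line $\RR\bx_0$, i.e.\ $\cC=\cC'\times\RR$ for an area-minimizing cone $\cC'\subset\RR^7$, which is flat by the interior regularity theory for area-minimizing hypersurfaces in dimensions $\leq7$; hence $\cC$ is flat. In the second case $\bx_0\cdot\nu_\cC>0$ forces $\pm\bx_0\notin\cC$, so projection along $\bx_0$ restricts to a proper local diffeomorphism of $\cC$, and since $\RR^7\setminus\{\bOh\}$ is simply connected $\cC$ is an entire minimal graph over $\bx_0^\perp\cong\RR^7$, hence a hyperplane by Simons' Bernstein theorem. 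Either way $\cC$ is flat, the desired contradiction. (Here the argument is in fact simpler than in Proposition~\ref{prop:geometric-generic}, since the compactness of $\cC\cap\SS^7$ replaces the appeal to \cite{Wang:graph}.)

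The main obstacle is the compactness used to produce the limiting configuration $(\cC,S',\bx_0)$. As in Proposition~\ref{prop:density-drop-effective}, where the entropy bound $\Lambda\leq2$ plays exactly this role, one needs an a priori upper bound $\Theta_{\cC_i}(0)\leq\Lambda$ to extract a convergent subsequence of the $\cC_i$; the local mass bounds for the $S_i'$ then follow because, by the argument of the first step applied to each $S_i'$, its blow-down is $\cC_i$, so $\Theta_{S_i'}(\,\cdot\,,r)\leq\Theta_{\cC_i}(0)\leq\Lambda$ for all $r$. Such a bound is automatic in the intended application to the generic regularity results of \cite{HardtSimon:foliation,Smale}, since every cone that occurs there arises as a tangent cone to an area-minimizing hypersurface of a priori bounded area in a fixed compact $8$-manifold, and so its density is controlled by the monotonicity formula.
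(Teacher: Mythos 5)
Your proof is correct and follows essentially the same route as the paper's: a compactness/contradiction argument reducing to a single limiting configuration, identification of $S'$ with $\cC+\bx_0$ via the blow-down and the Frankel property in $\SS^7$, and then the translation Jacobi field $\bx_0\cdot\nu_\cC\geq 0$ together with the strong maximum principle, ruling out both alternatives (splitting a line, or being a graph) using that $\RR^8$ is the critical dimension. Two small remarks. First, your $\cC''$ should be called the tangent cone \emph{at infinity} of $S'$, not the tangent cone at $\bx_0$: the blow-up of $\bar U_+$ at $\bx_0$ is not $\bar U_+$, so the containment would not pass to the blow-up; everything else in your paragraph (the inequality $\Theta_{\cC''}(\bOh)\geq\Theta_{S'}(\bx_0)$, the limit as $r\to\infty$, the cone rigidity) shows you do mean the blow-down, so this is only a mislabeling. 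Second, the uniform density bound you defer to the application is in fact automatic here and need not be deferred: since $\cC=\partial U_+$ is a minimizing boundary, comparing with the competitor obtained by replacing $U_+\cap B_1$ by $\emptyset$ (or by $B_1$) gives $\|\cC\|(B_1)\leq\cH^7(\partial B_1)$, hence $\Theta_\cC(\bOh)\leq|\SS^7|/\omega_7$ universally; combined with interior regularity of minimizing hypercones in $\RR^8$ and the Schoen--Simon estimates this yields the smooth compactness of links that the paper invokes, and hence the universal $\delta$ of the statement rather than a $\delta$ depending on a density bound $\Lambda$.
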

\begin{proof}
Using smooth compactness of the links of area minimizing cones in $\RR^8$ it suffices to rule out the case where $S'\subset \bar{U}_\pm$ is area-minimizing and there is $|\bx_0| = 1$ so that
\[
\Theta_{S'}(\bx_0) = \Theta_\cC(\bOh).
\]
Because $S'$ is contained in $\bar{U}_\pm$, its tangent cone at $\infty$ must be $\cC$ (e.g., using the Frankel property of minimal hypersurfaces in $\SS^n$). Thus, $S' = \cC + \bx_0$. This implies that $\cC + \lambda \bx_0 \subset \bar U_\pm$ as $\lambda \to0$, so $\bx_0 \cdot \nu_\cC \geq 0$. It cannot hold that $\bx_0\cdot \nu_\cC = 0$ since $\cC$ does not split a line, so $\bx_0\cdot \nu_\cC >0$. This would imply that $\cC$ is a graph, which is impossible since $\cC$ is non-flat.
\end{proof}

Using this, we obtain the following density drop result (compare with Lemma \ref{lemm:dens-drop}). 

\begin{corollary}
There is $\delta>0$ with the following property. Suppose that $\Sigma = \partial[\Omega] \subset B_2 \subset \RR^8$ is an area minimizing boundary with $\sing \Sigma = \{\bOh\}$. Suppose that $\Omega_1 , \Omega_2 ,\dots\supset \Omega$ is a sequence of sets of finite perimeter in $B_2$ with $\Sigma_i := \partial[\Omega_i]$ area minimizing, $\Sigma_{i}\cap\Sigma = \emptyset$, and $\Omega_i \to \Omega$. Then, for $\bx_i \in \Sigma_i \cap B_1$, we have
\[
\limsup_{i\to\infty} \Theta_{\Sigma_i}(\bx_i) \leq \Theta_\Sigma(\bOh) - \delta
\]
\end{corollary}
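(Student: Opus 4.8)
The plan is to mimic the proof of Lemma \ref{lemm:dens-drop} in the elliptic setting, replacing the role of Proposition \ref{prop:density-drop-effective} with the density drop statement just proven for area-minimizing cones. First I would argue by contradiction: suppose there is a sequence $\bx_i \in \Sigma_i \cap B_1$ with $\Theta_{\Sigma_i}(\bx_i) \geq \Theta_\Sigma(\bOh) - o(1)$. Since $\Omega_i \to \Omega$ and each $\Sigma_i = \partial[\Omega_i]$ is area minimizing, standard compactness for area-minimizing boundaries (together with the constancy theorem and the fact that $\sing\Sigma = \{\bOh\}$, so $\Sigma_i \to \Sigma$ smoothly away from $\bOh$) shows $\Sigma_i \to \Sigma$ with multiplicity one. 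Passing to a subsequence, $\bx_i \to \bx_0 \in \bar\Sigma \cap \bar B_1$.

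Next I would split into two cases depending on whether $\bx_0 = \bOh$ or $\bx_0 \neq \bOh$. If $\bx_0 \neq \bOh$, then near $\bx_0$ the flows $\Sigma_i$ are smooth graphs converging smoothly to the smooth minimal hypersurface $\Sigma$, so $\Theta_{\Sigma_i}(\bx_i) \to 1$; but $\Theta_\Sigma(\bOh) > 1$ since $\bOh$ is a genuine singular point (by Allard's theorem density $1$ forces regularity), giving a contradiction once $i$ is large. So we may assume $\bx_0 = \bOh$. Then I would blow up at $\bOh$, rescaling so that $|\bx_i|$ becomes $1$ (note $\bx_i \to \bOh$, so the rescaling factors tend to $\infty$): the rescaled $\Omega$ converges to the blow-up cone of $\Sigma$ at $\bOh$, which by the dimension restriction ($n=7$) and regularity theory is a non-flat area-minimizing cone $\cC$ with $\sing\cC = \{\bOh\}$, and $\Sigma$ rescales to $\cC$. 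The rescaled $\Omega_i$, being nested between the rescaled $\Omega$ and ambient space and area-minimizing, subconverge to an area-minimizing set $S'$ with $\bOh \notin \topint S'$-side and, crucially, $S' \cap \cC = \emptyset$ (since $\Sigma_i \cap \Sigma = \emptyset$ passes to the limit, using that area-minimizing boundaries cannot touch without coinciding by the strong maximum principle). Since $S'$ lies on one side of $\cC$ (it is the limit of the nested sequence), $\supp S' \subset \bar U_\pm$. Upper semicontinuity of density gives a point $\bx$ with $|\bx| = 1$ and $\Theta_{S'}(\bx) \geq \Theta_\cC(\bOh)$. Now the preceding Proposition applies to give $\Theta_{S'}(\bx) \leq \Theta_\cC(\bOh) - \delta$, a contradiction, for a uniform $\delta$ coming from smooth compactness of links of area-minimizing cones in $\RR^8$.

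The final bookkeeping step is to convert this into the stated uniform $\limsup$ bound: the $\delta$ is the one furnished by the Proposition, and one checks that $\Theta_\cC(\bOh) = \Theta_\Sigma(\bOh)$ (density is scale-invariant and the blow-up cone has the same density as $\Sigma$ at its vertex), so $\limsup_i \Theta_{\Sigma_i}(\bx_i) \leq \Theta_\Sigma(\bOh) - \delta$ as desired. I expect the main obstacle to be the careful verification that the limit set $S'$ genuinely lies in one closed component $\bar U_\pm$ of $\RR^8 \setminus \cC$ and does not cross $\cC$ — i.e.\ transferring the disjointness $\Sigma_i \cap \Sigma = \emptyset$ and the nesting $\Omega_i \supset \Omega$ through the blow-up limit. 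This uses that area-minimizing hypersurfaces satisfy a strong maximum principle (so two disjoint ones stay disjoint in the limit unless they coincide, which is ruled out since $\cC$ is non-flat while a limit component touching $\cC$ from one side would have to equal $\cC$), exactly the analogue of the ``does not smoothly cross'' hypothesis in Proposition \ref{prop:density-drop-effective}. The other routine-but-necessary ingredient is the compactness of area-minimizing boundaries with a uniform mass bound, which is classical.
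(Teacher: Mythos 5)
The paper states this corollary without proof, pointing only to the analogy with Lemma \ref{lemm:dens-drop}; your argument is precisely the intended elliptic adaptation of that proof (blow-up at $\bOh$ at scale $|\bx_i|$, one-sidedness inherited from $\Omega_i\supset\Omega$ and $\Sigma_i\cap\Sigma=\emptyset$, upper semicontinuity of density, then the preceding Proposition) and is correct. The one point to make explicit is that in the case $\bx_i\to\bx_0\neq\bOh$, where $\Theta_{\Sigma_i}(\bx_i)\to 1$, the uniformity of $\delta$ requires the uniform density gap $\Theta_\Sigma(\bOh)\geq 1+\delta_0$ for non-flat regular area-minimizing cones in $\RR^8$ --- the elliptic analogue of Proposition \ref{prop:stab-sing-ent-drop}, which follows from the same smooth compactness of links (or from \cite{IlmanenWhite:sharp.entropy}) rather than from the qualitative statement $\Theta_\Sigma(\bOh)>1$ alone.
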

Note that this result can be iterated exactly in the proof of Theorem \ref{theo:main-n-dim} to obtain generic regularity of area-minimizing hypersurfaces in eight dimensions:
\begin{corollary}[cf.\ {\cite[Theorem 5.6]{HardtSimon:foliation}}]
For $\Gamma^{6}\subset \RR^{8}$ a smooth compact oriented submanifold without boundary, there is an arbitrarily small $C^{\infty}$-perturbation of $\Gamma$ to $\Gamma'$ so that any area-minimizing integral current bounded by $\Gamma'$ is completely smooth. 
\end{corollary}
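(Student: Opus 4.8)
The plan is to run the iteration scheme from the proof of Theorem~\ref{theo:main-n-dim}, with the density-drop corollary above playing the role of Lemma~\ref{lemm:dens-drop} and Proposition~\ref{prop:density-drop-effective}. For a smooth closed hypersurface $\Gamma'\subset\RR^8$ that is $C^\infty$-close to $\Gamma$, set
\[
\cD(\Gamma') := \sup\{\Theta_T(\bx) : T\text{ area-minimizing},\ \partial T=[\Gamma'],\ \bx\in\sing T\},
\]
with $\sup\emptyset=-\infty$.

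First I would record the a priori bounds needed to make the iteration finite. By Allard's boundary regularity theorem, with estimates uniform for $\Gamma'$ in a fixed $C^2$-neighborhood of $\Gamma$, every such $T$ is smooth within a fixed neighborhood of $\Gamma'$; since area-minimizing hypersurface-currents in $\RR^8$ have $\dim\sing T\le 0$, the set $\sing T$ is then finite and contained in a fixed compact region at distance $\ge d_0>0$ from $\Gamma'$, uniformly in $\Gamma'$. Comparison with a fixed filling of $\Gamma'$ bounds $\|T\|(\RR^8)\le V_0$, so monotonicity of the density ratio gives $\Theta_T(\bx)\le\omega_7^{-1}d_0^{-7}V_0=:\Lambda$ at every $\bx\in\sing T$; and by Allard's interior theorem together with compactness of the space of area-minimizing hypercones in $\RR^8$, there is $c_0>0$ with $\Theta_T(\bx)\ge 1+c_0$ at every $\bx\in\sing T$. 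Hence $\cD(\Gamma')\in\{-\infty\}\cup[1+c_0,\Lambda]$, and $\cD(\Gamma')=-\infty$ says exactly that every area-minimizer bounded by $\Gamma'$ is a smooth compact hypersurface-with-boundary. If $\cD(\Gamma)=-\infty$ we are done; otherwise fix a one-parameter family $\{\Gamma_s\}_{s\in(-1,1)}$ of pairwise disjoint $C^\infty$-small perturbations of $\Gamma=\Gamma_0$ obtained by pushing $\Gamma$ along a nowhere-vanishing section of its normal bundle (such a section exists since $\Gamma$ bounds a smooth two-sided hypersurface in $\RR^8$), so that for $s\ne s'$ the leaves $\Gamma_s$ and $\Gamma_{s'}$ are disjoint and ordered.

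Next I would prove the analogue of Lemma~\ref{lemm:dens-drop}: $\limsup_{s\to s_0}\cD(\Gamma_s)\le\cD(\Gamma_{s_0})-\delta$ for all $s_0$, where $\delta$ is the constant from the density-drop corollary above. Arguing by contradiction, take $s_i\to s_0$, area-minimizers $T_i$ with $\partial T_i=[\Gamma_{s_i}]$ nearly realizing $\cD(\Gamma_{s_i})$, and $\bx_i\in\sing T_i$ with $\Theta_{T_i}(\bx_i)>\cD(\Gamma_{s_0})-\delta+o(1)$. By compactness of area-minimizers, pass to a subsequence with $T_i\to T_0$, an area-minimizer with $\partial T_0=[\Gamma_{s_0}]$, and $\bx_i\to\bx_0$; upper semicontinuity of density together with the gap $c_0$ forces $\bx_0\in\sing T_0$, so $\Theta_{T_0}(\bx_0)\le\cD(\Gamma_{s_0})$. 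Choose $r>0$ with $\overline{B_{2r}(\bx_0)}$ disjoint from $\Gamma_{s_0}$, hence from $\Gamma_{s_i}$ for $i$ large; there $T_0\lfloor B_{2r}(\bx_0)=\partial[\Omega]$ and $T_i\lfloor B_{2r}(\bx_0)=\partial[\Omega_i]$ for sets of finite perimeter, and since the $\Gamma_{s_i}$ are ordered relative to $\Gamma_{s_0}$, the standard ordering of solutions to the Plateau problem with ordered boundary data (cf.\ \cite{HardtSimon:foliation}) shows $\supp T_i$ and $\supp T_0$ do not cross, so after relabeling the side we may take $\Omega_i\supset\Omega$ with $\Omega_i\to\Omega$. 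Rescaling $B_{2r}(\bx_0)$ to $B_2$ and applying the density-drop corollary gives $\limsup_i\Theta_{T_i}(\bx_i)\le\Theta_{T_0}(\bx_0)-\delta\le\cD(\Gamma_{s_0})-\delta$, contradicting the choice of the $T_i$.

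Finally, this density-drop statement is iterated exactly as in the proof of Theorem~\ref{theo:main-n-dim}: since $\cD(\Gamma_s)$ takes values in $\{-\infty\}\cup[1+c_0,\Lambda]$, is upper semicontinuous in $s$, and drops by at least $\delta$ in every limit, a finite induction on the level produces $s$ arbitrarily close to $0$ with $\cD(\Gamma_s)=-\infty$; such $\Gamma':=\Gamma_s$ is the desired perturbation, and combined with boundary regularity it follows that every area-minimizing current bounded by $\Gamma'$ is completely smooth. I expect the main obstacle to be the uniformity claims in the second paragraph --- keeping the singular sets of \emph{all} competitor minimizers in a fixed compact region away from the moving boundary, so that $\cD(\Gamma_s)$ is finite and uniformly bounded and the localization feeding into the density-drop corollary is legitimate --- together with the (standard but not entirely trivial) ordering of $T_i$ relative to $T_0$; everything else is a direct transcription of the parabolic argument.
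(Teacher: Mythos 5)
Your overall strategy is exactly the one the paper intends: the appendix gives no detailed proof of this corollary beyond asserting that the density-drop statement ``can be iterated exactly as in the proof of Theorem \ref{theo:main-n-dim}'', and your definition of $\cD(\Gamma')$, the a priori bounds ($\delta_0$-gap at singular points, uniform density upper bound, boundary regularity keeping $\sing T$ away from $\Gamma'$), and the finite iteration are all the right transcription of Lemma \ref{lemm:dens-drop}. The problem is the step you defer to ``the standard ordering of solutions to the Plateau problem with ordered boundary data.'' There is no such general fact, and your choice of perturbation family does not supply it. For a codimension-two $\Gamma^6\subset\RR^8$, pushing along an arbitrary nowhere-vanishing normal section produces disjoint boundaries whose minimizers can genuinely cross: already in $\RR^3$, rotating the normal section as one traverses $\Gamma$ (e.g.\ pushing one side of a planar curve ``up'' and the other ``down,'' interpolating through the in-plane normal) gives $\Gamma_1$ disjoint from $\Gamma_0$ whose minimizer is transverse to that of $\Gamma_0$. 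Disjointness of codimension-two boundaries carries no ordering information by itself. This matters because the hypotheses ``$\Omega_i\supset\Omega$, $\Sigma_i\cap\Sigma=\emptyset$'' of the density-drop corollary are precisely the elliptic substitute for the avoidance principle, which is automatic for any two Brakke flows with disjoint compact initial data but is \emph{not} automatic for Plateau solutions with disjoint boundaries; without it the whole mechanism (Proposition \ref{prop:geometric-generic} applied to the blow-up) is unavailable.

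The fix is to choose the perturbation with more structure: fix a smooth compact Seifert hypersurface $N^7$ with $\partial N=\Gamma$, extend it slightly to $\hat N\supset N$, and take $\Gamma_s=\partial N_s$ for a nested family $N_s\subset\hat N$, so that $[\Gamma_{s'}]-[\Gamma_s]=\partial[A_{s,s'}]$ for a thin collar $A_{s,s'}\subset\hat N$ of small mass. One must then still prove that \emph{every} minimizer $T_i$ with $\partial T_i=[\Gamma_{s_i}]$ is, near the singular point $\bx_0$ of $T_0$ and for $i$ large, of the form $\partial[\Omega_i]\lfloor B_{2r}(\bx_0)$ with $\Omega_i\supset\Omega$ (or $\subset$) and disjoint support: this requires writing $T_i-[A_{s_0,s_i}]-T_0=\partial\mathbf E_i$, decomposing $\mathbf E_i$ by its level sets, and using the cut-and-paste inequality for perimeters together with the strong maximum principle and interior unique continuation to exclude crossings --- essentially the content of \cite[\S 5]{HardtSimon:foliation}, but it is the crux of adapting the parabolic argument rather than a citation-level remark, and it is needed for all competitors since $\cD$ is a supremum over all minimizers. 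Once this one-sidedness is in place, the rest of your argument (compactness, upper semicontinuity of density, the $\delta_0$ versus $\delta$ dichotomy, and the finite iteration) goes through as you describe.
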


\begin{corollary}[cf.\ {\cite{Smale}}]
For $(M^{8},g)$ a closed oriented Riemannian manifold and $\alpha \in H_{7}(M;\ZZ)$ a codimension-one integral homology class, there is an arbitrarily small $C^{k}$-perturbation of $g$ to $g'$ so that there is a unique $g'$-area-minimizing representative $\Sigma$ of $\alpha$ and $\Sigma$ is completely smooth. 
\end{corollary}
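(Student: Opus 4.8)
The plan is to deduce the corollary about generic metrics from the preceding density-drop corollary by a foliation-and-iteration argument that mirrors exactly the proof of Theorem~\ref{theo:main-n-dim} via Lemma~\ref{lemm:dens-drop}. First I would fix a smooth area-minimizing representative $\Sigma_0$ of $\alpha$ in the metric $g$; after a preliminary arbitrarily small perturbation of $g$ we may assume (using the usual structure theory in dimension eight --- Federer's dimension-reducing argument --- so that $\sing\Sigma_0$ is a finite set of points) that either $\Sigma_0$ is already smooth, in which case we are done, or $\Sigma_0$ has at least one singular point. Near each such singular point the tangent cone is a regular (hence non-flat) area-minimizing cone in $\RR^8$, so the density there is $\geq \Theta_\cC(\bOh)$ for some non-flat $\cC$, and in particular is uniformly bounded below away from $1$.

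Next I would set up the one-sided sweepout. Pick a small tubular neighborhood of $\Sigma_0$ and a foliation $\{\Sigma_s\}_{s\in(-1,1)}$ by hypersurfaces with $\Sigma_0$ the original representative, chosen so that the $\Sigma_s$ with $s>0$ lie on one side and bound regions $\Omega_s \supset \Omega_0$ that increase with $s$; by solving the Plateau problem / minimizing in the homology class relative to the obstacle $\Omega_s$ we obtain, for each $s$, an area-minimizing representative whose support is disjoint from $\Sigma_0$ (by the strong maximum principle for minimal hypersurfaces, once it is disjoint from $\Sigma_0$ it stays on one side). One then defines $\cD(s)$ to be the supremum of the density $\Theta(\bx)$ over singular points $\bx$ of the minimizer associated to $\Sigma_s$, with $\sup\emptyset = -\infty$. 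The density-drop corollary above, applied along any sequence $s_i \to s_0$ with the corresponding minimizers converging (by compactness of area-minimizing boundaries) to the minimizer at $s_0$ and with the supports disjoint from it, gives the upper semicontinuity-with-a-gap statement
\[
\limsup_{s\to s_0} \cD(s) \leq \cD(s_0) - \delta,
\]
exactly as in Lemma~\ref{lemm:dens-drop}. Since all the relevant densities lie in the bounded range $[1, \Theta_\cC(\bOh)]$ with the upper bound controlled uniformly (via $\alpha$ and the mass bound), iterating this gap estimate finitely many times forces $\cD(s) = -\infty$, i.e.\ the minimizer for $\Sigma_s$ is smooth, for a generic (in fact all sufficiently small) parameter $s$. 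Realizing this one-sided sweepout as a genuine perturbation of the metric $g$ rather than as an obstacle problem is the remaining bookkeeping step: one replaces the obstacle by a conformal or localized perturbation $g_s$ of $g$ supported near $\Sigma_0$ that makes $\Omega_s$ (weakly) mean-convex from outside, so that the $g_s$-minimizing representative of $\alpha$ is pushed strictly to one side of $\Sigma_0$; uniqueness of the minimizer then follows because in such a perturbed metric the minimizer cannot touch $\Sigma_0$ and the argument applies on each side.

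The main obstacle I expect is not the density-drop estimate itself --- that is handed to us by the preceding corollary --- but the interface between the abstract ``foliation of sets of finite perimeter with disjoint area-minimizing boundaries'' picture and the concrete statement about a generic \emph{metric}. One must (i) arrange the competitor regions $\Omega_s$ so that the $g$-area-minimizing (equivalently, after a small conformal change, $g_s$-area-minimizing) representative of the fixed homology class $\alpha$ actually has support inside $\bar\Omega_s \setminus \Sigma_0$, which requires a one-sided mean-convexity or calibration argument to prevent the minimizer from crossing back; (ii) handle the possibility that $\Sigma_0$ is disconnected or that there are several singular points, which is routine since the density drop is applied locally near each singular point; and (iii) ensure the perturbation of $g$ can be taken arbitrarily small in $C^k$ while still producing the required sweepout, which is a matter of choosing the conformal factor to be a small bump localized in the tubular neighborhood. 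None of these is deep, but getting the logical order right --- perturb $g$ once to make $\sing\Sigma_0$ finite, build the sweepout, apply the iterated gap estimate, then read off uniqueness and smoothness --- is where the care is needed. The statement about the Plateau problem (the preceding corollary, cf.\ \cite[Theorem 5.6]{HardtSimon:foliation}) is the same argument with the homology class replaced by a fixed boundary $\Gamma'$ and is if anything simpler, since there the competitor currents are literally obtained by minimizing subject to an obstacle.
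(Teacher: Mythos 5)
Your proposal follows essentially the same route the paper intends: the paper gives no detailed proof of this corollary, only the remark that the preceding density-drop corollary is to be iterated exactly as in the proof of Theorem \ref{theo:main-n-dim} (i.e.\ an analogue of Lemma \ref{lemm:dens-drop} for a one-sided foliation of a neighborhood of the singular minimizer, with the conversion from obstacle problem to metric perturbation handled as in Smale's original argument), and that is precisely what you flesh out. The one point where your sketch is thinner than the statement requires is the uniqueness of the $g'$-minimizer --- disjointness from $\Sigma_0$ does not by itself exclude two distinct smooth minimizers on the same side --- but that is a standard, separate generic-metrics argument (as in Smale) rather than a defect of the density-drop strategy, and the paper does not supply it either.
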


\bibliographystyle{alpha}
\bibliography{low-entropy-avoidance}

\end{document}